
\documentclass[11pt]{amsart}

\usepackage{amsmath}
\usepackage{amsthm}
\usepackage{hyperref}
\usepackage[margin=1.0in]{geometry}

\numberwithin{equation}{section}

\newtheorem{prop}{Proposition}
\newtheorem{lemma}[prop]{Lemma}

\newtheorem{thm}[prop]{Theorem}
\newtheorem{cor}[prop]{Corollary}

\numberwithin{prop}{section}

\theoremstyle{definition}
\newtheorem{defn}[prop]{Definition}

\newtheorem{rmk}[prop]{Remark}

\newcommand{\del}{\partial}
\newcommand{\delb}{\bar{\partial}}\newcommand{\dt}{\frac{\partial}{\partial t}}
\newcommand{\brs}[1]{\left| #1 \right|}

\newcommand{\gG}{\Gamma}

\newcommand{\gD}{\Delta}

\newcommand{\gs}{\sigma}

\newcommand{\gU}{\Upsilon}
\newcommand{\gL}{\Lambda}

\newcommand{\gl}{\lambda}

\newcommand{\gw}{\omega}
\newcommand{\ga}{\alpha}
\newcommand{\gb}{\beta}

\newcommand{\N}{\nabla}
\newcommand{\FF}{\mathcal F}

\renewcommand{\bar}[1]{\overline{#1}}

\renewcommand{\i}{\sqrt{-1}}

\newcommand{\bga}{\bar{\alpha}}

\newcommand{\bj}{\bar{j}}
\newcommand{\bk}{\bar{k}}
\newcommand{\bl}{\bar{l}}
\newcommand{\bm}{\bar{m}}
\newcommand{\bn}{\bar{n}}

\newcommand{\IP}[1]{\left<#1\right>}

\DeclareMathOperator{\Sym}{Sym}
\DeclareMathOperator{\Rc}{Rc}

\DeclareMathOperator{\tr}{tr}

\DeclareMathOperator{\ev}{ev}

\begin{document}

\title[Pluriclosed flow on manifolds with globally generated
bundles]{Pluriclosed flow on manifolds with globally generated bundles}

\begin{abstract} We show global existence and convergence results for the
pluriclosed flow on manifolds for which certain naturally associated tensor
bundles are globally generated.
\end{abstract}

\author{Jeffrey Streets}
\address{Rowland Hall\\
         University of California, Irvine\\
         Irvine, CA 92617}
\email{\href{mailto:jstreets@uci.edu}{jstreets@uci.edu}}

\thanks{J. Streets gratefully acknowledges support from the NSF via DMS-1341836,
DMS-1454854 and from the
Alfred P. Sloan Foundation via a Sloan Research Fellowship.}

\date{June 12, 2016}

\maketitle

\section{Introduction}

Given $(M^{2n}, J)$ a complex manifold, we say that a Hermitian metric $g$ is
pluriclosed if the associated K\"ahler form $\gw$ satisfies $\i \del\delb \gw =
0$.  For such metrics the author and Tian introduced \cite{ST2} a parabolic flow
generalizing the K\"ahler-Ricci flow (see \S \ref{pcfsec} for definitions). 
Recently in \cite{SBIPCF} the author obtained global existence and convergence
results for this flow and manifolds admitting special background metrics, for instance
tori and manifolds with nonpositive bisectional curvature.  In this short note
we establish global existence and convergence results for this flow assuming
conditions of a complex geometric nature as opposed to the differential
geometric assumptions of metrics with certain curvature conditions.  Thus these
theorems are more natural from a complex geometry standpoint, and apply to a much wider class of manifolds.  Moreover, our
results have implications for the existence and moduli of generalized K\"ahler
structures on these manifolds using the generalized K\"ahler-Ricci flow \cite{STGK}.  This
note 
is a close companion to \cite{SBIPCF}, and though we will review the most
pertinent aspects, familiarty with that paper will help in reading this.  Before
stating our theorems we record several definitions.

\begin{defn} Fix $(M^{2n}, J)$ a complex manifold.  Given $g$ a Hermitian metric
on $M$, by taking inverses and tensor products $g$ defines a Hermitian metric on
$(T_{1,0})^{\otimes p} \oplus (T^*_{1,0})^{\otimes q}$.  Then by restriction we
obtain a natural metric on any subbundle $E \subset (T_{1,0})^{\otimes p} \oplus
(T^{*}_{1,0})^{\otimes q}$, which we will refer to as $F_E(g)$.  We say that
such a holomorphic subbundle $E $ is 
\begin{enumerate}
\item \emph{covariant proper} if $E \subset (T_{1,0})^{\otimes p}$ for some $p \in \mathbb N$ and the natural map
\begin{align*}
F_E : \Sym^2(T^*_{1,0}) \to \Sym^2(E^*)
\end{align*}
is proper.
\item \emph{covariant weakly proper} if $E \subset (T_{1,0})^{\otimes p}$ for some $p \in \mathbb N$, and if given a background metric $h$, the map
\begin{align*}
F_E : \Sym^2 (T^*_{1,0}) \cap \left\{ g\ |\ \frac{\det g}{\det h} \geq 1
\right\} \to \Sym^2(E^*)
\end{align*}
is proper.
\item \emph{contravariant proper} if $E \subset( T^*_{1,0})^{\otimes p}$ for some $p \in \mathbb N$ and the natural map
\begin{align*}
F_E : \Sym^2(T_{1,0}) \to \Sym^2(E^*)
\end{align*}
is proper.
\item \emph{globally generated} if it is generated by sections.  That is,
letting $H^0(M, E)$ denote the finite dimensional space of holomorphic sections of $E$,
for all $p \in M$ the natural evaluation map
\begin{align*}
\ev_p : H^0(M, E) \to E_p
\end{align*}
is surjective.
\end{enumerate}
\end{defn}

\begin{rmk} 
\begin{enumerate}
\item An elementary interpretation of a bundle being proper is that an upper
bound for the if the induced metric on the bundle implies an upper bound for the
original metric.  For weakly proper bundles the meaning is that an upper bound
for the induced metric on the bundle combined with a lower bound for the
determinant implies an upper bound for the metric.
\item The most basic examples of proper bundles are $T_{1,0}, T_{1,0}^*$.  Other
examples include $(T_{1,0})^{\otimes p}, (T^*_{1,0})^{\otimes p}$.
\item An example of a bundle which is weakly proper but not proper is
$\Lambda^{n-1}(T_{1,0})$.
\item The question of when complex manifolds admit globally generated bundles has various relations to algebraic geometry.  We direct the reader to \cite{Bryant} and the references therein for some further context.
\end{enumerate}
\end{rmk}

Next we state our main theorems.  They are stated in an overly general manner, but we supplement the discussion with concrete families of examples.

\begin{thm} \label{contramplethm} Let $(M^{2n}, J)$ be a compact complex
manifold.
\begin{enumerate}
\item Suppose $M$ admits a contravariant globally generated proper bundle. 
Given $g$ a pluriclosed metric the solution to pluriclosed flow exists on
$[0,\tau^*)$ (see Definition \ref{taustardef} for the definition of $\tau^*$).
\item Suppose $M$ admits a contravariant globally generated weakly proper
bundle.  If $c_1^{BC} = 0$ and $[\del \gw] = 0 \in H^{2,1}$ then the solution
exists on $[0,\infty)$ and converges exponentially as $t \to \infty$ to a
Calabi-Yau metric.
\end{enumerate}
\end{thm}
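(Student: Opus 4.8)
The plan is to reduce both statements to an a priori equivalence of the evolving metric with a fixed background metric, and to extract that equivalence from the global sections of $\gf$. Write $\gw_t$ for the solution of pluriclosed flow with $\gw_0 = \gw$, with associated metrics $g_t$, and fix a Hermitian metric $\hat g$. From \cite{SBIPCF} we take the short-time existence together with the extension and regularity theory for pluriclosed flow: a solution defined on $[0,T)$ extends smoothly past $T$ once $g_t$ is uniformly equivalent to $\hat g$ on $[0,T)$, with higher-order bounds depending only on the equivalence constants and the initial data; and when $c_1^{BC} = 0$ and $[\del\gw] = 0 \in H^{2,1}$, the ratio $\det g_t / \det \hat g$ stays bounded above and below along the flow, uniformly in $t$. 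Thus statement (1) reduces to a uniform equivalence of $g_t$ and $\hat g$ on each $[0,T]$ with $T < \tau^*$, and (2) to such an equivalence that is moreover uniform in $t$ on $[0,\infty)$.

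The key step is an evolution estimate produced by the sections of $\gf$. Since $\gf$ is globally generated, fix $s_1, \dots, s_N \in H^0(M, \gf)$ whose values span $\gf_x$ at every $x \in M$, and put $u(x,t) = \sum_{\ga=1}^N \brs{s_\ga}^2_{F_\gf(g_t)}(x)$. Each $s_\ga$ is $\delb$-holomorphic, so the Weitzenb\"ock formula for the Chern Laplacian reads $\gD_{g_t} \brs{s_\ga}^2 = \brs{\N s_\ga}^2 - \langle (\tr_{g_t}\Theta_\gf)\, s_\ga, s_\ga\rangle$, with $\Theta_\gf$ the Chern curvature of $(\gf, F_\gf(g_t))$. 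Since $\gf \subset (T^*_{1,0})^{\otimes p}$ is a holomorphic subbundle, its curvature does not exceed that of $(T^*_{1,0})^{\otimes p}$, whose trace acts on each tensor slot by minus the (Hermitian) Chern--Ricci endomorphism $S_{g_t}$ of $g_t$; hence $\gD_{g_t}\brs{s_\ga}^2 \ge \brs{\N s_\ga}^2 + \sum_{m} \langle S_{g_t} \cdot_m s_\ga, s_\ga\rangle$. On the other hand, writing the flow as $\dt (g_t)_{i\bj} = -(S_{g_t})_{i\bj} + (Q_{g_t})_{i\bj}$ with $Q_{g_t} \ge 0$ the quadratic torsion term (cf.\ \cite{ST2}), one computes $\dt \brs{s_\ga}^2 = \sum_m \langle (S_{g_t} - Q_{g_t}) \cdot_m s_\ga, s_\ga \rangle \le \sum_m \langle S_{g_t} \cdot_m s_\ga, s_\ga \rangle$. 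Subtracting, the $S_{g_t}$-terms cancel and
\[
\left( \dt - \gD_{g_t} \right) u \le - \sum_\ga \brs{\N s_\ga}^2 \le 0 .
\]
By the maximum principle $\max_M u(\cdot, t) \le \max_M u(\cdot, 0)$ throughout the interval of existence. Since the $s_\ga$ span $\gf$, this upper bound on $u$ gives $F_\gf(g_t) \le C \hat h$ for a fixed metric $\hat h$ on $\gf$, uniformly in $t$; and then contravariant properness of $\gf$ -- or, under the hypotheses of (2), weak properness together with the two-sided bound on $\det g_t$ noted above -- turns this into $g_t \ge c\hat g$, again uniformly in $t$.

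For (1) it remains to bound $g_t$ above on each $[0,T]$. The lower bound $g_t \ge c\hat g$, combined with the cohomological evolution $[\gw_t]_A = [\gw_0]_A - t\, c_1^{BC}$, is precisely the input needed to run the determinant bound and the parabolic Schwarz lemma of \cite{SBIPCF}, which give $g_t \le C(T)\hat g$; since moreover the lower bound is uniform on the whole interval of existence, a continuity argument yields existence on all of $[0,\tau^*)$. For (2), $c_1^{BC} = 0$ forces $\tau^* = \infty$ by Definition \ref{taustardef}, and the determinant bound is $t$-independent, so the weakly proper bundle now yields a $t$-independent two-sided bound $c\hat g \le g_t \le C\hat g$ on $[0,\infty)$; the regularity estimates of \cite{SBIPCF} are then uniform in $t$ and the flow is precompact in $C^\infty$. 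Any subsequential limit is a pluriclosed metric, and by monotonicity of the relevant energy along the flow it is a static solution, $(\rho_B)^{1,1} = 0$; since $[\del\gw_t] = 0 \in H^{2,1}$ is preserved, a rigidity argument forces such a static metric to be K\"ahler and Ricci-flat, i.e.\ Calabi--Yau, hence the unique such metric in the fixed Aeppli class. Uniqueness of the limit promotes subconvergence to convergence of the full flow, and a \L ojasiewicz--Simon inequality at the Calabi--Yau limit -- equivalently, a spectral gap for the linearized flow modulo the gauge action of $\Diff$ -- gives the exponential rate.

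We expect the main obstacle to be the evolution inequality $(\dt - \gD_{g_t}) u \le 0$: in the non-K\"ahler setting one must verify that the principal Chern--Ricci terms produced by the Weitzenb\"ock formula and by the flow equation genuinely cancel, and that the residual contributions -- the second fundamental form of $\gf \subset (T^*_{1,0})^{\otimes p}$ and the quadratic torsion $Q$ -- enter with the sign that keeps the maximum principle applicable. A secondary difficulty, for (2), is the rigidity step identifying the static limit as a Calabi--Yau metric from $[\del\gw] = 0$, and the spectral gap behind the exponential rate; both run parallel to the analysis in \cite{SBIPCF} and \cite{STGK}.
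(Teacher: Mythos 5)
Your proposal is correct and follows essentially the same route as the paper: the heart of the argument is the evolution inequality $(\partial_t-\Delta)\brs{s}^2\le 0$ for holomorphic sections coming from the cancellation of the Chern--Ricci terms against $\partial_t g=-S+Q$ (the paper computes the norm directly in the ambient bundle $(T^*_{1,0})^{\otimes p}$ rather than via the subbundle's second fundamental form, but the sign works out either way), followed by (weak) properness to convert the section bound into a lower metric bound, the determinant/$\brs{\del\ga}^2$ maximum principle for the upper bound, and the regularity, soliton-rigidity, and stability machinery of \cite{SBIPCF}, \cite{ST3}, \cite{ST1} for long-time existence and exponential convergence. The differences (summing the sections into one function $u$, phrasing the final step as a \L ojasiewicz--Simon argument rather than citing the dynamic stability theorem) are cosmetic.
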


\begin{rmk}
\begin{enumerate}
\item The bundle $T^*_{1,0}$ is proper.  If it is globally generated then the
anticanonical bundle is also globally generated and it follows that the formal
existence time $\tau^* = \infty$.
\item K\"ahler manifolds with globally generated cotangent bundle are quite abundant.  For instance, any product of Riemann surfaces of positive
genus yields a manifold with globally generated $T^*_{1,0}$.  Moreover,
having a globally generated cotangent bundle is inherited by complex subvarieties, so in particular subvarieties of tori have globally generated cotangent bundles.  This includes large families of manifolds of general type.
\item The hypothesis $[\del \gw] = 0$ is satisfied automatically in some circumstances, such as of course if $h^{2,1} = 0$, or if $(M^{2n}, J)$ satisfies the $\del\delb$-lemma.
\end{enumerate}
\end{rmk}

\begin{thm} \label{covamplethm} Let $(M^{2n}, J)$ be a complex manifold with a
covariant globally generated weakly proper bundle and $c_1^{BC} = 0$.  Given $g$
a pluriclosed metric with $[\del \gw] = 0 \in H^{2,1}$ the solution to
pluriclosed flow with initial condition $g$ exists on $[0,\infty)$ and converges
exponentially as $t \to \infty$ to a Calabi-Yau metric.
\end{thm}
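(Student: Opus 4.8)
The plan is to run the pluriclosed flow and establish the a priori estimates needed for long-time existence and convergence, using the globally generated covariant weakly proper bundle to bound the evolving metric from above. Let me think about the structure of this argument.

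First, recall the general scheme for pluriclosed flow, following the companion paper. The pluriclosed flow, after a suitable diffeomorphism gauge, can be written in terms of a potential or reduced to studying a family of metrics. With $c_1^{BC} = 0$ and $[\partial\omega] = 0$, we can use the parabolic structure: write $\omega_t = \omega_0 + $ evolving terms, or more precisely reduce the flow to a scalar/Monge-Ampère-type flow plus a $(1,0)$-form piece. Key quantities to control: (1) upper bound on the metric, (2) lower bound on $\det g$, (3) higher derivative estimates.

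Second, the **key step**: obtain the upper bound on $g_t$. With $c_1^{BC} = 0$, there's a background Calabi-Yau volume form, and we can control $\det g_t$ from below along the flow (this should follow from the structure of the flow equations and the vanishing of $c_1^{BC}$, giving uniform control of the volume form, hence lower bound on determinant). Now invoke the bundle: let $E$ be the covariant globally generated weakly proper bundle. Since $E$ is globally generated, pick finitely many holomorphic sections $s_1, \dots, s_N \in H^0(M, E)$ spanning $E_p$ at every point. The induced metric $F_E(g_t)$ on $E$ — here $E \subset (T_{1,0})^{\otimes p}$, so $F_E(g)$ comes from inverses and tensor products of $g$. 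The quantity $\sum_i |s_i|^2_{F_E(g_t)}$ is controlled along the flow: one computes its evolution and finds it satisfies a differential inequality (using that the sections are holomorphic, so their covariant derivatives interact nicely with the Chern connection; the "bad" terms should be controlled or have a sign). This gives a uniform upper bound on $F_E(g_t)$. Since $E$ is weakly proper and we have the lower bound on $\det g_t$, this upper bound on $F_E(g_t)$ translates to a uniform upper bound on $g_t$ itself.

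Third, once we have two-sided bounds on $g_t$ (upper bound just obtained, lower bound from the determinant bound combined with the upper bound — or directly via a parabolic Schwarz-type argument), we bootstrap: the standard parabolic regularity theory for pluriclosed flow (Evans–Krylov / Schauder estimates as developed in the companion paper and [ST2]) gives uniform $C^\infty$ bounds on $g_t$ on $[0,\infty)$, hence long-time existence. For convergence, use the $c_1^{BC} = 0$ and $[\partial\omega] = 0$ hypotheses to show the flow is, up to gauge, a gradient-type flow or satisfies a monotonicity (e.g. a decreasing energy whose critical points are Calabi-Yau); combined with the uniform estimates and an argument ruling out collapsing (the determinant lower bound), the flow converges exponentially to a pluriclosed static metric, which under these cohomological hypotheses must be Kähler Ricci-flat, i.e. Calabi-Yau. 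The exponential rate follows from linearizing at the limit and a standard Łojasiewicz-type or spectral-gap argument.

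**The main obstacle** I anticipate is the computation of the evolution inequality for $\sum_i |s_i|^2_{F_E(g_t)}$ and controlling its reaction terms. Because $E$ is only a subbundle of $(T_{1,0})^{\otimes p}$, the induced connection on $E$ is the restriction of the tensor-product Chern connection plus a second fundamental form contribution, and the curvature terms appearing when we differentiate $|s_i|^2_{F_E(g)}$ twice will involve the full curvature of $(T_{1,0}, g_t)$ — which along pluriclosed flow is not sign-definite and involves the torsion $H = -d^c\omega$. The trick (as in [SBIPCF]) will be that the relevant trace of the curvature appearing in the heat operator applied to $|s_i|^2$ is exactly the Ricci-type term that the flow controls, so that modulo terms absorbed using Cauchy–Schwarz and the torsion estimates, one gets $(\partial_t - \Delta)\left(\sum_i |s_i|^2_{F_E(g_t)}\right) \leq C \sum_i |s_i|^2_{F_E(g_t)}$ or even $\leq 0$ after adding a correction term, yielding the bound. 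Getting the torsion terms to cooperate — rather than, say, only obtaining a bound that grows in time — is the delicate point, and is presumably where the "weakly proper" hypothesis (as opposed to just "proper") and the determinant lower bound are essential.
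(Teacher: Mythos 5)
Your overall architecture matches the paper's: bound $\sup_M|\sigma_i|^2_{g_t}$ for a spanning set of holomorphic sections of $E$, deduce an upper bound on the induced metric $F_E(g_t)$, combine with a lower bound on $\det g_t$ (from the evolution of $\log\frac{\det g}{\det h}$ with $\rho(h)=0$) and the weak properness to get two-sided bounds on $g_t$, then invoke the regularity and convergence machinery of \cite{SBIPCF} and the Perelman-type functional. However, there is a genuine gap at the central estimate, which you flag but do not resolve, and the mechanism you propose for closing it is not the one that works. For a holomorphic $A\in (T_{1,0})^{\otimes p}$ the reaction term in the evolution of $\brs{A}^2$ is $+\,p\IP{Q,\tr_g(A\otimes\bar A)}$, a torsion-quadratic term with the \emph{unfavorable} sign; it is not a ``Ricci-type term that the flow controls,'' and no Cauchy--Schwarz absorption gives better than $(\dt-\gD)\brs{A}^2\le p\brs{T}^2\brs{A}^2$, i.e.\ a time-growing bound, which is useless for the convergence argument. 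The paper's resolution has two ingredients you are missing: (i) pass to $\log\brs{A}^2$, so the bad term becomes exactly $p\brs{T}^2$, independent of $\brs{A}$; and (ii) exploit $c_1^{BC}=0$ and $[\del\gw_0]=0$ to choose the reduced flow with $\mu=0$ and $\del\hat\gw_t=\delb\eta$ static, so that $\phi=\del\ga-\eta$ satisfies $(\dt-\gD)\brs{\phi}^2\le -\brs{T}^2$; the quantity $\Phi=\log\brs{\gs_i}^2+p\brs{\phi}^2$ is then a supersolution of the heat equation and the maximum principle applies (the $-\infty$ locus of the log is harmless at maxima). This coupling with the reduced-flow one-form is the ``central new observation'' of the paper and is precisely where the cohomological hypotheses enter.

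A smaller but real misattribution: you locate the role of ``weakly proper'' (versus ``proper'') and of the determinant lower bound in taming the torsion terms of the evolution inequality. They play no role there. The determinant lower bound and weak properness are used only after the section-norm bound is in hand, to convert the upper bound on $F_E(g_t)$ into an upper bound on $g_t$ (and then, again via the determinant bound, into a lower bound on $g_t$). Also note that your plan to first establish the upper bound on $g_t$ and then derive the determinant lower bound is backwards relative to the logic required: the determinant lower bound comes for free from $\rho(h)=0$ and Lemma \ref{volumeformev}, independently of any metric bound, and is an input to (not an output of) the weak-properness step.
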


\begin{rmk}
\begin{enumerate}
\item The two cohomological hypotheses $c_1^{BC} = 0$ and $[\del \gw] = 0 \in
H^{2,1}$ are natural to impose if one expects convergence to Calabi-Yau.  The
Hopf surface $S^3 \times S^1$ with standard complex structure has $c_1 = 0$, but
$c_1^{BC} \neq 0$ and $[\del \gw] \neq 0$ for any pluriclosed metric.
\item The bundle $T^{1,0}$ is proper, and this bundle being globally generated is equivalent
to $(M^{2n}, J)$ being complex homogeneous, as follows from elementary arguments
(see \cite{Akhiezer}).  Theorem \ref{covamplethm} can be used to rule out the
existence of pluriclosed metrics on certain backgrounds as well.  For instance,
compact quotients of $SL(2,\mathbb C)$ are parallelizable and hence $c_1^{BC} =
0$.  Moreover, they satisfy $h^{2,1} = 0$ (\cite{Akhiezer} Corollary 8.2.3).  It
follows that these manifolds admit no pluriclosed metric, since Theorem
\ref{covamplethm} then yields a K\"ahler metric, which quotients of
$SL(2,\mathbb C)$ cannot support since the only K\"ahler parallelizable
manifolds are tori.  This particular statement can be obtained directly by
averaging a putative SKT metric and then performing direct calculations using
the Lie algebra structure of $SL(2,\mathbb C)$.  Nonetheless we include this
example to illustrate the nonexistence principle.
\end{enumerate}
\end{rmk}

\begin{cor} \label{GKcor} Let $(M^{2n}, I)$ be a complex manifold with either a
covariant or contravariant globally generated weakly proper bundle and $c_1^{BC}
= 0$.  Suppose $(M^{2n}, I, J, g)$ is a generalized K\"ahler structure with
$[\del \gw_I] = 0 \in H^{2,1}_I$.  Then $(M^{2n}, I, J, g)$ is deformable
through generalized K\"ahler structures to a structure $(M^{2n}, I,
J_{\infty},g_{\infty})$ such that $g_{\infty}$ is Calabi-Yau.
\end{cor}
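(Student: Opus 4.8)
The plan is to combine the generalized Kähler-Ricci flow with the two convergence theorems above. Recall (from \cite{STGK}) that given a generalized Kähler structure $(M^{2n}, I, J, g)$ one can run a flow in which the metric $g(t)$ evolves by pluriclosed flow with respect to the \emph{fixed} complex structure $I$, while $J = J(t)$ evolves by an associated ODE so that $(M^{2n}, I, J(t), g(t))$ remains generalized Kähler for all $t$ in the interval of existence; in particular $\gw_I(t)$ solves pluriclosed flow with initial data $\gw_I$. First I would observe that the cohomological hypotheses are all phrased in terms of the fixed complex structure $I$: we are given $c_1^{BC}(M,I) = 0$ and $[\del \gw_I] = 0 \in H^{2,1}_I$, and $(M,I)$ carries the relevant globally generated weakly proper bundle. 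Hence Theorem \ref{covamplethm} (in the covariant case) or part (2) of Theorem \ref{contramplethm} (in the contravariant case) applies directly to the pluriclosed flow $\gw_I(t)$ on $(M,I)$: the solution exists on $[0,\infty)$ and converges exponentially as $t \to \infty$ to a Calabi-Yau metric $g_\infty$ on $(M,I)$.

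Next I would feed this long-time existence back into the generalized Kähler-Ricci flow. Since the $g(t)$-component of the generalized Kähler-Ricci flow is exactly the pluriclosed flow $\gw_I(t)$, the whole generalized Kähler flow exists on $[0,\infty)$ — the potential obstruction to continuing the generalized flow is precisely a singularity of the underlying pluriclosed flow, which we have just ruled out. Along the flow, each time-slice $(M^{2n}, I, J(t), g(t))$ is a generalized Kähler structure, so $(M,I,J,g) = (M,I,J(0),g(0))$ is deformed through generalized Kähler structures. It then remains to extract a limiting structure at $t = \infty$. The exponential convergence $g(t) \to g_\infty$ from the pluriclosed side, together with the evolution equation for $J(t)$ (whose right-hand side is controlled by $g(t)$, $\nabla g(t)$ and the torsion, all of which are uniformly bounded and in fact exponentially decaying once the metric converges exponentially in $C^\infty$), forces $J(t)$ to be Cauchy and hence to converge exponentially to a limiting complex structure $J_\infty$. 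Passing to the limit in the generalized Kähler compatibility conditions (which are closed conditions) shows $(M^{2n}, I, J_\infty, g_\infty)$ is again generalized Kähler, and $g_\infty$ is the Calabi-Yau metric supplied by the convergence theorem.

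The main technical point — and the step I expect to require the most care — is the last one: upgrading the exponential $C^\infty$ convergence of the metric $\gw_I(t)$ to convergence of the pair $(J(t), g(t))$ as a generalized Kähler structure, i.e. showing that the $J(t)$ evolution does not drift. This is genuinely a statement about the coupled system and not just the pluriclosed flow in isolation; one needs the structure of the generalized Kähler-Ricci flow from \cite{STGK}, in particular that $\dt J$ is a fixed algebraic expression in the torsion and curvature of $g(t)$, to see that $\int_0^\infty |\dt J|\, dt < \infty$ with exponential tail, which yields the Cauchy property. A secondary point is simply bookkeeping about which of the two theorems to invoke, but since both conclusions (long-time existence plus exponential convergence to Calabi-Yau) are identical, this causes no real difficulty; the covariant and contravariant hypotheses enter only through the choice of bundle on $(M,I)$.
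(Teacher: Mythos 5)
Your proposal is correct and follows essentially the same route as the paper: run the generalized K\"ahler--Ricci flow in the $I$-fixed gauge so that $g_t$ solves pluriclosed flow on $(M,I)$, invoke Theorem \ref{covamplethm} or Theorem \ref{contramplethm}(2) for long-time existence and exponential convergence to a Calabi--Yau metric, and then use the exponential decay of the torsion (which controls the vector field generating the diffeomorphisms carrying $J_t$) to conclude $J_t \to J_\infty$. The paper's proof is just a more compressed version of the same argument, phrasing the last step via $J_t = \phi_t^* J$ with $\phi_t$ converging to a limiting diffeomorphism.
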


The central new observation in proving the above results is that there are very
clean evolution equations for the square norm of a holomorphic section of a
vector bundle along pluriclosed flow.  The hypotheses on the bundle allow one to
turn these favorable evolution equations into upper or lower bounds for the
metric.  Combining these with prior results yields full regularity of the flow.
 We make use of the Perelman functionals for pluriclosed flow discovered in
\cite{ST3} to obtain the convergence statements.  In \S \ref{bckgrnd} we provide
relevant background information on the pluriclosed flow and generalized
K\"ahler-Ricci flow.  In \S \ref{secev} we derive evolution equations for
holomorphic sections of tensor bundles over $M$.  We combine our estimates in \S
\ref{proofsec} and give the proofs of the theorems.

\vskip 0.1in

\textbf{Acknowledgements}:  The author would like to thank Ben McKay, Robert
Bryant, and Gueo Grantcharov for helpful comments.  This article was prepared for a special issue of the journal  ``Complex geometry and Lie Groups'' associated to the conference of the same name held in Nara, Japan March 22nd-26th.  The author thanks Anna Fino, Ryushi Goto, and Keizo Hasegawa for the kind invitation.

\section{Background} \label{bckgrnd}

In this section we give a very brief introduction to relevant aspects of
pluriclosed flow and generalized K\"ahler-Ricci flow.  The reader should refer
to \cite{SBIPCF}, \cite{ST2}, and \cite{ST3} for more detail.

\subsection{Pluriclosed flow} \label{pcfsec}

In this subsection we record some elementary properties of the pluriclosed flow.
First we express the flow
equation using differential operators
appearing in Hodge theory.  In particular, on a complex manifold $(M^{2n}, J)$,
a one-parameter family of Hermitian metrics $g_t$ is a solution of pluriclosed
flow if the corresponding K\"ahler forms $\gw_t$ satisfy
\begin{align} \label{pcfH}
\dt \gw = \del \del^*_{g} \gw + \delb \delb^*_{g} \gw +
\i \del\delb \log \det g.
\end{align}
As shown in \cite{ST2}, this is a strictly parabolic equation with pluriclosed
initial condition $\gw_0$, and admits short-time solutions on compact manifolds.

It is also useful to express this flow using the Chern connection.  Given
$(M^{2n}, J, g)$ a Hermitian manifold, the Chern connection is the unique
connection $\N$ on $T_{1,0}$ such that $\N g \equiv 0$, $\N J \equiv 0$ and
the torsion of $\N$ has vanishing $(1,1)$ piece.  This torsion can be expressed in complex coordinates
as
\begin{align*}
T_{ij \bk} = g_{l \bk} \left[ \gG_{i j}^l - \gG_{ji}^l \right] = g_{j \bk,i} -
g_{i \bk,j}.
\end{align*}
The metric is K\"ahler if and only if $T \equiv 0$.  Due to the fact that $\N$,
in general, has torsion, there are various ``Ricci
curvatures" which can be defined using this connection.  First, one has
\begin{align*}
S_{i \bj} =&\ g^{\bl k} \Omega_{k \bl i \bj},
\end{align*}
where $\Omega$ is the Chern curvature.  We will also use the
representative of the first Chern class with respect to the Chern connection, which we will denote by
\begin{align*}
 \rho_{i\bj} =&\ g^{\bl k} \Omega_{i\bj k \bl}.
\end{align*}
We also define a certain
quadratic expression in torsion, namely
\begin{align*}
Q_{i \bj} =&\ g^{\bl k} g^{\bn m} T_{i k \bn} T_{\bj \bl m}.
\end{align*}
With these definitions made,
we can express the pluriclosed flow equation (\cite{ST2} Proposition 3.3) as
\begin{align} \label{PCFCC}
\dt g =&\ - S + Q.
\end{align}

\subsection{Formal existence time}

Important in understanding the existence time of solutions to (\ref{pcfH})
(equivalently \ref{PCFCC}) is a formal cohomological obstruction.  Observe that
a pluriclosed metric defines a positive class in Aeppli cohomology.  Using
(\ref{pcfH}), it is direct to see that this class evolves along the pluriclosed
flow via
\begin{align*}
[\gw_t] = [\gw_0] - t c_1.
\end{align*}
This allows us to define a formal maximal smooth existence time (cf. \cite{ST3})

\begin{defn} \label{taustardef} Given $(M^{2n}, J)$ a compact complex manifold,
and $g_0$ a pluriclosed metric, let
\begin{align*}
\tau^* := \sup \{t > 0\ |\ [\gw_0] - t c_1 \mbox{ admits pluriclosed metrics}
\}.
\end{align*}
\end{defn}

For times $\tau < \tau^*$ we can define a reduction of the pluriclosed flow to a
flow on a $(1,0)$ form on $[0,\tau]$.  First, fix a background Hermitian metric
$h$.  Since $\tau < \tau^*$, there exists $\mu
\in
\Lambda^{1,0}$ such that
\begin{align} \label{omegahatdef}
\hat{\gw}_{\tau} := \gw_{0} - \tau \rho(h) + \delb \mu + \del \bar{\mu} > 0.
\end{align}
Now consider the smooth one-parameter family of K\"ahler forms
\begin{align*}
\hat{\gw}_t := \frac{t}{\tau} \hat{\gw}_{\tau} + \frac{(\tau-t)}{\tau} \gw_0.
\end{align*}

\begin{defn} Let $(M^{2n}, g_t, J)$ be a smooth solution to pluriclosed flow on
$[0,\tau]$.  Given choices $\hat{g}_t, h, \mu$ as above, for a one parameter
family $\ga_t \in \Lambda^{1,0}$ let
\begin{align*}
\gw_{\ga} := \hat{\gw}_t + \delb \ga_t + \del \bar{\ga_t}.
\end{align*}
We say that a one-parameter family $\ga_t \in \Lambda^{1,0}$ is a solution to
\emph{$(\hat{g}_t,h,\mu)$-reduced pluriclosed flow} if
\begin{gather} \label{reduced}
\begin{split}
\dt \ga =&\ \delb^*_{g_{\ga}} \gw_{\ga} - \frac{\i}{2} \del \log
\frac{\det g_{\ga}}{\det h} - \frac{\mu}{\tau},\\
\ga_0 =&\ 0.
\end{split}
\end{gather}
\end{defn}

\begin{rmk} \label{reductionrmk} This reduction generalizes the reduction of
K\"ahler-Ricci flow to the complex parabolic Monge Ampere equation (with
additional background terms).  In the special case which we frequently consider where
$c_1^{BC} = 0$ and $[\del \gw_0] = 0$, it follows that $\tau^* = \infty$, and
moreover if one chooses the background metric $h$ to satisfy $\rho(h) = 0$, then
the reduction can be chosen so that $\hat{\gw}_t = \gw_0$, $\mu = 0$.  This is
relevant to obtaining certain a priori estimates below.
\end{rmk}

\subsection{Generalized K\"ahler-Ricci flow} \label{gksec}

We will briefly summarize the generalized K\"ahler-Ricci flow here, referring
the reader to \cite{SBIPCF, STGK} for further detail.   To begin we introduce generalized K\"ahler structures, referring the reader to \cite{Gualtieri} for further background.  A generalized K\"ahler
manifold is a quadruple $(M^{2n}, I,J,g)$ consisting of a Riemannian metric with
two compatible integrable complex structures $I$, $J$, such that the
corresponding K\"ahler forms satisfy
\begin{align*}
d^c_I \gw_I = H = - d^c_J \gw_J, \qquad d H = 0.
\end{align*}
In particular, the metric $g$ is pluriclosed with respect to two different
complex structures.  This observation combined with the connection between
pluriclosed flow and renormalization group flows from \cite{ST3} leads one to the
definition of generalized K\"ahler-Ricci flow:
\begin{align*}
\dt g =&\ - 2 \Rc + \frac{1}{2} H^2 \qquad \dt H = \gD_d H,\\
\dt I =&\ L_{\theta_I^{\sharp}} I, \qquad \dt I = L_{\theta_J^{\sharp}} J.
\end{align*}
Here $\theta_I$ is the Lee form with respect to the Hermitian structure $(g,I)$.
Interestingly, both complex structures evolve by time-dependent, but distinct,
diffeomorphisms.  It is possible to gauge-modify this flow to freeze one complex
structure, but not both.  Choosing to freeze $I$, the resulting metric evolves
by pluriclosed flow on $(M, I)$, and so our results on pluriclosed flow will
have immediate applications to this flow.  We will refer to this as considering the flow ``in the $I$-fixed gauge."

\subsection{Evolution equations and technical results}

We begin by recording two evolution equations relevant to what follows.

\begin{lemma} \label{volumeformev} (\cite{SBIPCF} Lemma 6.1) Let $(M^{2n}, J,
g_t)$ be a solution to
pluriclosed flow, and let $h$ denote another Hermitian metric on $(M, J)$.  Then
\begin{align*}
 \left( \frac{\del}{\del t} - \gD \right) \log \frac{\det g}{\det h} =&\
\brs{T}^2 - \tr_g \rho(h).
\end{align*}
\end{lemma}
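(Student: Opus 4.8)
The plan is to differentiate $\log \det g$ along the flow using the Chern-connection form of the equation and then compare with the Laplacian of $\log \frac{\det g}{\det h}$. By the standard variational formula $\dt \log \det g = g^{\bj i} \dt g_{i \bj}$, equation \eqref{PCFCC} immediately gives $\dt \log \det g = - \tr_g S + \tr_g Q$, and since $h$ is independent of $t$ the same holds for $\dt \log \frac{\det g}{\det h}$. The task is then to recognize each of the terms $\tr_g S$, $\tr_g Q$, and the Laplacian term in the form appearing on the right-hand side.

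First, unwinding the definition $Q_{i \bj} = g^{\bl k} g^{\bn m} T_{i k \bn} T_{\bj \bl m}$ and contracting with $g^{\bj i}$ produces precisely the Hermitian norm of the torsion tensor, so $\tr_g Q = \brs{T}^2$; this is purely a matter of matching index conventions. Second, I would observe that $\tr_g S = g^{\bj i} g^{\bl k} \Omega_{k \bl i \bj}$ and $\tr_g \rho(g) = g^{\bj i} g^{\bl k} \Omega_{i \bj k \bl}$ differ only by a relabeling of the four contracted indices in the Chern curvature tensor, hence $\tr_g S = \tr_g \rho(g)$. Third, since the Chern--Ricci form satisfies $\rho(g) = - \i \del \delb \log \det g$, i.e. $\rho(g)_{i \bj} = - \del_i \del_{\bj} \log \det g$, and likewise $\rho(h)_{i \bj} = - \del_i \del_{\bj} \log \det h$, applying the canonical Laplacian $\gD = g^{\bj i} \del_i \del_{\bj}$, which involves no connection terms acting on functions, to $\log \frac{\det g}{\det h}$ yields $\gD \log \frac{\det g}{\det h} = - \tr_g \rho(g) + \tr_g \rho(h)$.

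Combining these three computations gives $\left( \dt - \gD \right) \log \frac{\det g}{\det h} = - \tr_g S + \tr_g Q + \tr_g \rho(g) - \tr_g \rho(h)$, and the terms $- \tr_g S$ and $\tr_g \rho(g)$ cancel, leaving $\brs{T}^2 - \tr_g \rho(h)$, as desired.

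I do not expect a genuine obstacle: the argument is a short computation whose only delicate points are conventions. One should fix that $\gD$ denotes $g^{\bj i} \del_i \del_{\bj}$ on scalars and that $\rho$ is normalized so that $\gD \log \det g = - \tr_g \rho(g)$; and one uses the standard fact that the representative of $c_1$ associated to the Chern connection equals $- \i \del \delb \log \det g$ even when $g$ is not K\"ahler, which is exactly what makes the identification $\tr_g S = \tr_g \rho(g)$ consistent with $\rho$ appearing on the right-hand side.
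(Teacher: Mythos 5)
Your proof is correct, and it is essentially the standard computation behind the cited Lemma 6.1 of \cite{SBIPCF}: trace the flow equation $\dt g = -S+Q$, use that the two Chern--Ricci contractions have equal full traces ($\tr_g S = \tr_g \rho(g)$), and use $\rho(g)_{i\bj} = -\del_i\del_{\bj}\log\det g$ together with the fact that the canonical Laplacian on functions is $g^{\bj i}\del_i\del_{\bj}$. The sign conventions you fix are exactly the ones consistent with the paper's use of the lemma (e.g.\ $(\dt-\gD)\log\frac{\det g}{\det h}=\brs{T}^2\ge 0$ when $\rho(h)=0$), so no further justification is needed.
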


\begin{lemma} \label{delalphaev} (\cite{SBIPCF} Proposition 4.9,4.10)  Let
$(M^{2n}, J, g_t)$ be a solution to
pluriclosed flow.  Fix background data $\hat{g}_t, h, \mu$ and a solution
$\ga_t$ to (\ref{reduced}).  Then
\begin{align*}
\dt \brs{\del \ga}^2_{g_{\ga}} =&\ \gD_{g_{\ga}} \brs{\del \ga}^2 - \brs{\N \del
\ga}^2 - \brs{\bar{\N} \del \ga}^2 - 2 \IP{Q, \tr\del \ga \otimes
\delb \bar{\ga}} - 2 \Re \IP{\tr_{g_{\ga}} \N^{g_{\ga}} T_{\hat{g}} + \del \mu,
\delb \bga}.
\end{align*}
Suppose furthermore that $\mu = 0$ and
\begin{align*}
\del \hat{\gw}_t = \del \hat{\gw}_0 = \delb \eta.
\end{align*}
Let $\phi = \del \ga - \eta$.  Then
\begin{align*}
\left(\dt - \gD_{g_t} \right) \brs{\phi}^2 =&\ - \brs{\N \phi}^2 -
\brs{T_{g_t}}^2
- 2 \IP{Q, \phi \otimes \bar{\phi}}.
\end{align*}
\end{lemma}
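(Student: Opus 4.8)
The plan is as follows. The first step is to verify that the metric $g_{\ga}$ associated to $\gw_{\ga} = \hat{\gw}_t + \delb \ga + \del \bar{\ga}$ is itself a solution of pluriclosed flow. Differentiating in time and using, from \eqref{omegahatdef}, that $\dt \hat{\gw}_t = \tfrac{1}{\tau}(\hat{\gw}_{\tau} - \gw_0) = -\rho(h) + \tfrac{1}{\tau}(\delb \mu + \del \bar{\mu})$, together with \eqref{reduced}, the reality of $\gw_{\ga}$ and of $\log \tfrac{\det g_{\ga}}{\det h}$, and the Chern class identity $\rho(h) + \i \del \delb \log \det g_{\ga} = \i \del \delb \log \tfrac{\det g_{\ga}}{\det h}$, one finds that all $\mu$-terms cancel and that $\gw_{\ga}$ satisfies \eqref{pcfH}. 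In particular the coefficients of $g_{\ga}$, and hence of $g_{\ga}^{-1}$, evolve by $-S + Q$ via \eqref{PCFCC}.

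Next I would compute the evolution of $\del \ga$. From \eqref{reduced} and $\del \del = 0$ one has $\dt \del \ga = \del \delb^*_{g_{\ga}} \gw_{\ga} - \tfrac{1}{\tau} \del \mu$. The core of the argument is then a Weitzenb\"ock-type identity rewriting $\del \delb^*_{g_{\ga}} \gw_{\ga}$ as $\gD_{g_{\ga}} \del \ga$, plus Chern-curvature terms contracted into $\del \ga$, plus a term built from the $\del$-derivative of the background torsion $T_{\hat g}$; this uses $\del \gw_{\ga} = \del \hat{\gw}_t - \delb \del \ga$, so that $\delb^*_{g_{\ga}} \gw_{\ga}$ differs from a $\delb^*$ applied to $\del \ga$ only by a background piece, and then commutes $\del$ past $\delb^*_{g_{\ga}}$ on the bundle-valued form, the commutator being controlled by curvature and torsion. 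Contracting this identity with $\overline{\del \ga}$, taking real parts, and adding the contribution of the time derivative of the metric inside $\brs{\del \ga}^2_{g_{\ga}}$: the $-S$ part of $\dt g_{\ga}$ combines with the curvature terms from the Weitzenb\"ock identity and reassembles, together with the first-order terms, into $\gD_{g_{\ga}} \brs{\del \ga}^2 - \brs{\N \del \ga}^2 - \brs{\bar{\N} \del \ga}^2$; the $Q$ part of $\dt g_{\ga}^{-1}$ produces $-2 \IP{Q, \tr \del \ga \otimes \delb \bga}$; and the surviving background contribution is $-2\Re \IP{\tr_{g_{\ga}} \N^{g_{\ga}} T_{\hat g} + \del \mu, \delb \bga}$, giving the first assertion.

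For the second assertion, set $\mu = 0$ and use $\del \hat{\gw}_t = \del \hat{\gw}_0 = \delb \eta$. Then $\del \gw_{\ga} = \delb \eta - \delb \del \ga = - \delb \phi$, so $\phi = \del \ga - \eta$ is a $\delb$-primitive of $-\del \gw_{g_t}$, equivalently of the torsion $T_{g_t}$; moreover $\dt \phi = \dt \del \ga = \del \delb^*_{g_{\ga}} \gw_{\ga}$ since $\eta$ is $t$-independent. I would then substitute $\del \ga = \phi + \eta$ into the identity of the first part. Using $\delb \phi = - \del \gw_{g_t}$ and the definition of $S$, the terms involving $\eta$ together with $\tr_{g_{\ga}} \N^{g_{\ga}} T_{\hat g}$ telescope into the single term $-\brs{T_{g_t}}^2$; the term $-2\IP{Q, \tr \del \ga \otimes \delb \bga}$ collapses to $-2\IP{Q, \phi \otimes \bar{\phi}}$, since $Q$ is quadratic in torsion and the $\eta$-contribution is the part of $\del \ga$ responsible for $\delb$-closedness modulo torsion and drops out of the pairing; and the good second-order terms assemble into $-\brs{\N \phi}^2$. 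Collecting everything yields $(\dt - \gD_{g_t}) \brs{\phi}^2 = - \brs{\N \phi}^2 - \brs{T_{g_t}}^2 - 2\IP{Q, \phi \otimes \bar{\phi}}$.

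I expect the main obstacle to be the Weitzenb\"ock-type commutation identity in the second step: because the Chern connection has torsion, commuting $\del$ past $\delb^*_{g_{\ga}}$ on the $(2,0)$-form $\delb^*_{g_{\ga}} \gw_{\ga}$ generates, besides the Chern curvature, several torsion–torsion and torsion–$\N T$ terms, and one must check that after contracting with $\overline{\del \ga}$ and combining with the $-S + Q$ evolution of the metric these organize exactly into the stated right-hand side with no residual terms. The second delicate point is the bookkeeping in the third step — verifying that the background contribution telescopes precisely to $-\brs{T}^2$ and that the $Q$-term reduces cleanly — which is where the identity $\delb \phi = -\del \gw$ and the explicit definitions of $S$ and $Q$ are used.
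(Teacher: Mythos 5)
First, note that this paper does not actually prove Lemma \ref{delalphaev}: it is imported verbatim from \cite{SBIPCF} (Propositions 4.9 and 4.10), so there is no in-paper argument to compare against. Judged on its own, your plan for the \emph{first} identity is essentially the standard (and correct) route: check that $\gw_{\ga}$ solves (\ref{pcfH}) (your cancellation of the $\mu$-terms and the reassembly of $-\rho(h)+\i\del\delb\log\frac{\det g_\ga}{\det h}$ into $\i\del\delb\log\det g_\ga$ is right), then show that $\del\ga \in \Lambda^{2,0}$ satisfies a heat-type equation $\dt \del\ga = \gD_{g_\ga}\del\ga - \tr_{g_\ga}\N^{g_\ga}T_{\hat g} - \del\mu$ via the torsion/curvature commutation you describe, and finally feed this into the general norm-evolution formula (in this paper, Lemma \ref{01formgenev} with $p=2$), which produces exactly the terms $-\brs{\N\del\ga}^2 - \brs{\bar\N\del\ga}^2 - 2\IP{Q,\tr \del\ga\otimes\delb\bga} - 2\Re\IP{\cdot,\delb\bga}$. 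That part of the proposal is sound, modulo the deferred Weitzenb\"ock computation that you correctly flag as the technical core.

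The gap is in your derivation of the second identity. You attribute the term $-\brs{T_{g_t}}^2$ to a ``telescoping'' of the $\eta$- and $\tr\N T_{\hat g}$-contributions, and you dismiss the cross terms in $\IP{Q,\tr(\phi+\eta)\otimes(\bar\phi+\bar\eta)}$ with the non-argument that ``$Q$ is quadratic in torsion and the $\eta$-contribution drops out of the pairing''; $\eta$ is a fixed $(2,0)$-form and those cross terms do not vanish for any such reason. The actual mechanism is different and cleaner: because $\mu=0$ and $\tr_{g}\N\delb\eta = \gD\eta + (\text{curvature})$, the background source cancels and $\phi$ itself satisfies the clean equation $\dt\phi = \gD_{g_t}\phi + (\text{curvature correction})$, so the general lemma applied directly to $\phi$ yields $-\brs{\N\phi}^2 - \brs{\bar\N\phi}^2 - 2\IP{Q,\tr\phi\otimes\bar\phi}$ with no leftover $\eta$-terms at all. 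The term $-\brs{T_{g_t}}^2$ then comes from the pointwise tensor identity $\bar\N\phi = \delb\phi = -\del\gw_{g_t}$ (the Chern connection has no mixed Christoffel symbols, so $\bar\N$ of a $(2,0)$-form \emph{is} $\delb$ of it), whence $\brs{\bar\N\phi}^2 = \brs{\del\gw_{g_t}}^2 = \brs{T_{g_t}}^2$ exactly. You in fact record the key fact that $\delb\phi = -\del\gw_{g_t}$, but you do not use it for this purpose, and without it your substitution route leaves uncancelled terms. I would rework the second step to derive the evolution of $\phi$ directly rather than by substituting $\del\ga = \phi + \eta$ into the first identity.
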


Next we record some background theorems on regularity and the existence and
rigidity of limit points for pluriclosed flow relevant to what follows.
Corollary \ref{convergencecor} summarizes the situation and is the main technical tool.

\begin{thm} \label{uplowbndthm} (\cite{SBIPCF} Theorem 1.8) Let $(M^{2n}, J)$ be
a compact complex manifold.
 Suppose $g_t$ is a solution to the pluriclosed flow on $[0,\tau)$, with
$\ga_t$ a solution to the $(\hat{g}_t,h,\mu)$-reduced flow. Assume there is a
constant
$\gl$ such that for all $t \in [0,\tau)$,
\begin{align*}
\gl g_0 \leq g_t.
\end{align*}
There exists a constant $\gL = \gL(n,g_0,\hat{g}, h, \mu,\gl)$ such that for all
$t \in
[0,\tau)$,
\begin{align*}
g_t \leq \gL (1 + t) g_0, \qquad \brs{\del \ga}^2 \leq \gL.
\end{align*}
\end{thm}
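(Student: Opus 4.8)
The plan is to prove this purely as an a priori estimate via the parabolic maximum principle, the key structural input being that the hypothesis $\gl g_0 \leq g_t$ forces $\tr_{g_t} \hat{g} \leq C(\gl, g_0, \hat{g})$, and more generally makes every $g_t$-contraction of a fixed background tensor uniformly bounded. The argument splits into two pieces: first the bound on $\brs{\del \ga}^2$, and then the upper bound on $g_t$ itself. I would carry these out in that order, since the torsion terms appearing in the metric estimate are handled most cleanly once $\brs{\del \ga}$ is under control.

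For the bound on $\brs{\del \ga}^2$ I would run the maximum principle on the evolution equation of Lemma \ref{delalphaev}. The favorable features are already visible there: the two gradient terms $- \brs{\N \del \ga}^2 - \brs{\bar{\N} \del \ga}^2$ are good, and since $Q \geq 0$ as a Hermitian form the mixed term $- 2 \IP{Q, \tr \del \ga \otimes \delb \bar{\ga}}$ is, after a Cauchy--Schwarz splitting, non-positive. The only genuinely dangerous term is the linear one, $- 2 \Re \IP{\tr_{g_{\ga}} \N^{g_{\ga}} T_{\hat{g}} + \del \mu, \delb \bga}$: here one rewrites $\N^{g_{\ga}}$ in terms of the fixed connection $\N^{\hat{g}}$, which introduces the difference tensor $\Gamma(g_{\ga}) - \Gamma(\hat{g})$, a first derivative of $g_{\ga}$ and hence schematically $\N \del \ga$ plus terms of order $\del \ga$. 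One then Cauchy--Schwarzes this against $\delb \bga$, absorbing the $\N \del \ga$ piece into $- \brs{\N \del \ga}^2$ — this is exactly where $\gl g_0 \leq g_t$ is used, to keep the $g_{\ga}$-norms of all background tensors uniformly bounded — and is left with a differential inequality of the form $(\del_t - \gD) \brs{\del \ga}^2 \leq C + C \brs{\del \ga}^2$. In the special gauge of Remark \ref{reductionrmk}, where $\mu = 0$ and $\del \hat{\gw}_t = \delb \eta$, the second identity of Lemma \ref{delalphaev} makes this cleaner still: $(\del_t - \gD) \brs{\del \ga - \eta}^2 \leq 0$ outright, so $\sup \brs{\del \ga - \eta}^2 \leq \sup \brs{\eta}^2$ and hence $\brs{\del \ga}^2 \leq \gL$ uniformly. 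One reduces to this situation, or re-runs the same absorption argument in general and invokes the maximum principle on the given interval.

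For the upper bound on $g_t$ I would not try to extract it from $\brs{\del \ga}^2$ directly — a bound on $\brs{\del \ga}^2_{g_t}$ in the \emph{moving} metric does not by itself bound $g_t$ — but instead establish a Chern--Lu / Aubin--Yau type second-order inequality for $\tr_{g_0} g_t$ along pluriclosed flow, of the schematic form $(\del_t - \gD) \log \tr_{g_0} g_t \leq C \tr_{g_t} g_0 + (\mbox{torsion terms})$. By hypothesis $\tr_{g_t} g_0 \leq n \gl^{-1}$, and the torsion terms are controlled using the bound from the first step together with the lower bound on $g_t$ (and, if one prefers to work through the volume form, Lemma \ref{volumeformev}); the maximum principle then yields $\log \tr_{g_0} g_t \leq C(1+t)$, that is $g_t \leq \gL(1+t) g_0$, since a trace bound controls every eigenvalue of a positive form (the lower bound already gives $\det g_t \geq \gl^n \det g_0$, but this is not even needed). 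The main obstacle throughout is precisely the torsion: the linear term in the first step and the torsion terms in this second-order inequality are the places where the unknown metric enters unfavorably, and the argument hinges on the favorable signs ($Q \geq 0$ and the good gradient terms) together with the uniform lower bound being enough to tame them.
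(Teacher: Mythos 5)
You should first note that this theorem is not proved in the present paper at all: it is imported verbatim from \cite{SBIPCF} (Theorem 1.8), so there is no in-paper argument to compare yours against, and I can only assess your proposal on its own terms. It has genuine gaps at exactly the two points where the real difficulty lives. In your first step, the dangerous term in Lemma \ref{delalphaev} is $-2\Re\IP{\tr_{g_{\ga}}\N^{g_{\ga}}T_{\hat g}+\del\mu,\delb\bga}$, and your plan is to expand $\N^{g_{\ga}}=\N^{\hat g}+\gU(g_{\ga},\hat g)$ and absorb the resulting derivative terms into $-\brs{\N\del\ga}^2-\brs{\bar{\N}\del\ga}^2$. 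This does not work: $\gU(g_{\ga},\hat g)$ is built from $\N^{\hat g}g_{\ga}=\N^{\hat g}(\delb\ga+\del\bar{\ga})$, i.e.\ from mixed second derivatives of $\ga$ (first derivatives of the $(1,1)$-part of $d\ga$), whereas the good gradient terms only control derivatives of the $(2,0)$-form $\del\ga$, a different component entirely. The clean alternative you invoke --- passing to $\phi=\del\ga-\eta$ so that $(\dt-\gD)\brs{\phi}^2\le 0$ --- requires $\mu=0$ and $\del\hat{\gw}_t=\delb\eta$, which by Remark \ref{reductionrmk} is available only under the cohomological hypotheses $c_1^{BC}=0$ and $[\del\gw_0]=0$; Theorem \ref{uplowbndthm} assumes neither, so ``one reduces to this situation'' is not an option in the stated generality.

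The second step has the same problem in a more acute form. In a Chern--Lu inequality for $\tr_{g_0}g_t$ along pluriclosed flow, the ``torsion terms'' you defer are not controlled by what you have established: the torsion $T_{g_t}$ is essentially $\del\gw_{\ga}=\del\hat{\gw}_t-\delb(\del\ga)$, hence a \emph{first derivative} of $\del\ga$, and a sup bound on $\brs{\del\ga}^2$ gives no pointwise control of it (nor of $\N T$, which also appears). Saying these terms are ``controlled using the bound from the first step together with the lower bound'' assumes away the central obstruction --- this is precisely why \cite{SBIPCF} develops the reduced-flow machinery rather than running the standard K\"ahler--Ricci flow trace argument. Finally, even granting bounded torsion terms, your differential inequality $(\dt-\gD)\log\tr_{g_0}g_t\le C$ integrates to $\tr_{g_0}g_t\le Ce^{Ct}$, which is exponential rather than the linear growth $\gL(1+t)g_0$ asserted in the theorem; to obtain linear growth one needs an inequality of the form $(\dt-\gD)\tr_{g_0}g_t\le C$ for the trace itself, which is a strictly stronger statement than the one you derive.
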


\begin{thm} \label{EKthm} (\cite{SBIPCF} Theorem 1.7) Let $(M^{2n}, J)$ be a
compact complex manifold.
 Suppose $g_t$ is a solution to the pluriclosed flow on $[0,\tau)$, $\tau \leq
1$, with
$\ga_t$ a solution to the $(\hat{g}_t,h,\mu)$-reduced flow as in
(\ref{reduced}).  Suppose there
exist constants $\gl,\gL$ such that
\begin{align} \label{Ekthmhyp}
\gl g_0 \leq g_t \leq \gL g_0, \qquad \brs{\del \ga}^2 \leq \gL.
\end{align}
Given $k \in \mathbb N$ there exists a constant $C =
C(n,k,g_0,\hat{g},h,\mu,\gl,\gL)$ such that
\begin{align*}
\sup_{M \times \{t\}} t \sum_{j=0}^k \brs{  \N_g^{j} \gU(g,h)}^{\frac{2}{1+j}}
\leq C,
\end{align*}
where $\gU(g,h) = \N^g - \N^h$ is the difference of the Chern connections
associated to $g$ and $h$.
\end{thm}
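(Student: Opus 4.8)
The plan is to treat the $(\hat g_t,h,\mu)$-reduced flow \eqref{reduced} as a uniformly parabolic second order equation for the $(1,0)$-form $\ga$ and to run a parabolic bootstrap with the natural weights in $t$. Uniform parabolicity is exactly what the two-sided bound $\gl g_0 \le g_t \le \gL g_0$ of \eqref{Ekthmhyp} supplies, since the symbol of the linearization of \eqref{reduced} is $g^{-1}$. The quantity to estimate, $\gU(g,h) = \N^g - \N^h$, is an algebraic expression in $g,g^{-1},h,h^{-1}$ and one covariant $h$-derivative of $g$; since $\gw_\ga = \hat\gw_t + \delb\ga + \del\bar{\ga}$ we have $g \sim \del\ga$, hence $\gU \sim \N\del\ga \sim \del^2\ga$ and $\N^j\gU \sim \del^{j+2}\ga$. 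So the claim amounts to interior $C^{k+2}$-type bounds on $\ga$ degenerating like the stated powers of $t$ as $t\downarrow 0$; the hypothesis $\tau\le 1$ only keeps these weights harmless at the top end, and the already-available bound $\brs{\del\ga}^2\le\gL$ serves as a zeroth order input.

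The decisive first step is the base estimate $\sup_{M\times\{t\}} t\,\brs{\gU}^2\le C$. From \eqref{PCFCC}, Lemma \ref{volumeformev} and the identities of Lemma \ref{delalphaev} one derives a Bochner-type inequality
\begin{align*}
\left(\dt - \gD_g\right)\brs{\gU}^2 \le -\brs{\N\gU}^2 + C\left(1 + \brs{\gU}^2 + \brs{\gU}^4\right),
\end{align*}
with $C = C(n,g_0,\hat g,h,\mu,\gl,\gL)$; the quartic term is the once-differentiated descendant of the torsion-squared term $Q$ in $\dt g$, and the two-sided metric bound together with $\brs{\del\ga}^2\le\gL$ enter both in forming $C$ and in making the inequality parabolic. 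Since $\brs{\N\del\ga}^2$ is comparable to $\brs{\gU}^2$ and, by Lemma \ref{delalphaev}, $\brs{\del\ga}^2$ is bounded and evolves with good terms $-\brs{\N\del\ga}^2 - \brs{\bar{\N}\del\ga}^2$, I would feed the Bochner inequality into the maximum principle applied to a Bernstein-type test function such as $\Phi = t\,\brs{\gU}^2 + A\,\brs{\del\ga}^2$, or, to tame the quartic reaction, the polynomial $\Phi = (A + t\,\brs{\gU}^2)\,t\,\brs{\gU}^2$ with $A$ large, so that the good terms absorb the cross terms and one concludes $t\,\brs{\gU}^2\le C$ on $[0,\tau)$. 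Were the equation scalar and concave one could instead invoke parabolic Evans--Krylov to bound the parabolic-H\"older norm of the second derivatives of $\ga$ at scale $\sqrt t$, which would give the same conclusion and in addition a uniform interior $C^\alpha$ bound on $\gU$.

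With $\gU$ bounded, hence the coefficients of the linearized flow controlled, the remaining bootstrap is standard: differentiate the flow repeatedly, apply interior parabolic estimates to get $t^{1+j}\brs{\N^j\gU}^2\le C_j$ for $j\le k$, and interpolate between consecutive orders in the style of Hamilton to repackage these as the single quantity $t\sum_{j=0}^k\brs{\N^j\gU}^{2/(1+j)}\le C$; the exponent $\tfrac{2}{1+j}$ is forced by invariance under the parabolic rescaling $(x,t)\mapsto(\gl x,\gl^2 t)$. Alternatively the combined bound can be produced in one stroke by the maximum principle: from the family of Bochner inequalities $(\dt - \gD_g)\brs{\N^j\gU}^2 \le -\brs{\N^{j+1}\gU}^2 + C\sum_{p+q+r\le j}\brs{\N^p\gU}\,\brs{\N^q\gU}\,\brs{\N^r\gU} + (\text{lower order})$, those same powers are exactly what lets the reaction terms be absorbed by the good terms $-\brs{\N^{j+1}\gU}^2$ after multiplying by $t$.

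The main obstacle is the base estimate. Because pluriclosed flow, even in the reduced form \eqref{reduced}, is genuinely a system for a $(1,0)$-form rather than a scalar equation, the concavity hypothesis of Evans--Krylov is not available off the shelf, so that route must be replaced by a bespoke maximum-principle argument in which the quartic reaction term in $\brs{\gU}^2$ is the delicate point; this is precisely where the two-sided bound $\gl g_0\le g_t\le\gL g_0$ and the a priori bound $\brs{\del\ga}^2\le\gL$ are indispensable, and where the specific algebraic structure of $-S+Q$ must be used to arrange that the genuinely dangerous part of the reaction either carries a favorable sign or is absorbed. Once this base $C^{2+\alpha}$-type bound for $\ga$ is in hand, the rest is routine parabolic regularity.
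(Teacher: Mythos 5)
First, a structural point: this paper does not prove Theorem \ref{EKthm} at all --- it is quoted verbatim from \cite{SBIPCF} (Theorem 1.7 there) and used as a black box. So there is no in-paper proof to compare against, and your proposal has to be judged as a reconstruction of that external argument.

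As such a reconstruction, you correctly map the terrain: the two-sided bound \eqref{Ekthmhyp} gives uniform parabolicity of the reduced flow \eqref{reduced}, $\gU \sim \N g \sim$ second derivatives of $\ga$, the exponents $2/(1+j)$ and the factor of $t$ are forced by parabolic scaling, and once a base estimate of Calabi/$C^{2,\alpha}$ type is in hand the higher orders follow by a routine Bernstein--Shi or Schauder bootstrap. But the proposal does not prove the base estimate; it names it ``the main obstacle,'' offers two candidate test functions, and then defers to ``the specific algebraic structure of $-S+Q$'' without exhibiting any cancellation. That step is the entire content of the theorem. The difficulty is quantitative: the hypotheses control $g$ only at order zero, so in the inequality $(\dt - \gD)\brs{\gU}^2 \le -\brs{\N\gU}^2 + C(1+\brs{\gU}^4)$ there is no a priori bound on $\brs{\gU}$ with which to tame the quartic reaction, and at a spatial maximum of $t\brs{\gU}^2$ one only gets $t\brs{\gU}^4 = t^{-1}\left(t\brs{\gU}^2\right)\brs{\gU}^2$, which is not absorbed by $-t\brs{\N\gU}^2$, nor by the good gradient terms supplied by Lemma \ref{delalphaev}, absent exactly the kind of complete-square structure that Yau's and Calabi's computations provide in the Monge--Amp\`ere case. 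Whether such structure exists here is precisely what must be established. Indeed the label of the theorem (EKthm) and the title of \cite{SBIPCF} give away the actual route: the reduced flow is recast via a generalized (Born--Infeld type) metric built from $g$ and $\del\ga$ so as to expose a concavity property, an Evans--Krylov argument then yields the weighted $C^{2,\alpha}$ base estimate, and Schauder bootstrapping gives the higher derivatives. You explicitly set that route aside because Evans--Krylov is ``not available off the shelf'' for systems; that is true, but making it available is the cited paper's contribution, and your substitute maximum-principle argument is asserted rather than carried out. The proposal therefore has a genuine gap at the one nontrivial step.
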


\begin{lemma} \label{riglemma} (\cite{SBIPCF} Lemma 6.3) Let $(M^{2n}, J, h)$ be
a compact Hermitian
manifold
with $\rho(h) \leq 0$.  Suppose $g$ is a pluriclosed metric which is a steady
gradient
soliton.  Then $g$ is a
Calabi-Yau metric.
\end{lemma}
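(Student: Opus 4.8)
The plan is to exploit the self-similarity of a steady gradient soliton, combined with the evolution of the volume form from Lemma \ref{volumeformev}, to force the torsion to vanish, after which classical rigidity for compact steady K\"ahler-Ricci solitons finishes the job.

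Recall from \cite{ST3} that a steady gradient soliton is a pluriclosed metric $g$ admitting a function $f$ so that the solution to pluriclosed flow with initial condition $g$ is $g_t = \varphi_t^* g$, where $\varphi_t$ is the one-parameter family generated by the gradient vector field $X = \grad_g f$; since pluriclosed flow fixes $J$ the maps $\varphi_t$ are necessarily biholomorphic, so that $X$ is real holomorphic and $\gw_{g_t}^n = \varphi_t^* \gw_g^n$. Applying Lemma \ref{volumeformev} with the given background metric $h$ and writing $u_t = \log \frac{\det g_t}{\det h}$ gives
\begin{align*}
\left( \dt - \gD_{g_t} \right) u_t = \brs{T_{g_t}}^2 - \tr_{g_t} \rho(h),
\end{align*}
while $u_t = \log \frac{\varphi_t^* \gw_g^n}{\gw_h^n}$ forces $\dt u_t \big|_{t = 0} = \divg_g X$. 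Equating the two expressions at $t = 0$ yields the scalar identity
\begin{align*}
\divg_g X - \gD_g u_0 = \brs{T}^2 - \tr_g \rho(h),
\end{align*}
whose right-hand side is a sum of the two nonnegative quantities $\brs{T}^2$ and $- \tr_g \rho(h)$, the latter because $\rho(h) \leq 0$ and $g > 0$.

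The next step is to integrate this identity against a positive measure that annihilates the left-hand side. When $n = 2$ a pluriclosed metric is Gauduchon, so $\int_M \gD_g u_0 \, \gw_g^2 = 0$, and $\int_M \divg_g X \, \gw_g^2 = \int_M \LL_X \gw_g^2 = 0$ as well; in general one uses instead a weight $e^{-f}$ coming from the soliton structure, the point being that $\divg_g X - \gD_g u_0$ is self-adjoint with respect to $e^{-f} \gw_g^n$ --- this is precisely the content of the Perelman functional for pluriclosed flow of \cite{ST3}. Either way, integration forces
\begin{align*}
\int_M \left( \brs{T}^2 - \tr_g \rho(h) \right) e^{-f} \gw_g^n = 0,
\end{align*}
and since both terms are nonnegative we conclude $T \equiv 0$, i.e., $g$ is K\"ahler (and, incidentally, $\rho(h) \equiv 0$).

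Finally, with $g$ K\"ahler the pluriclosed flow with initial condition $g$ is the K\"ahler-Ricci flow, so $g$ is a compact steady gradient K\"ahler-Ricci soliton; by the classical rigidity of such solitons (the $f$-weighted Laplacian of the scalar curvature $R$ is $- 2 \brs{\Rc}^2$, and integrating against $e^{-f}$, using $R = \gD f$ and $R + \brs{\N f}^2$ constant, gives $\Rc \equiv 0$) the metric $g$ is Ricci-flat, hence Calabi-Yau. I expect the main difficulty to be the middle step: identifying the exact steady soliton equation for pluriclosed flow, including its torsion terms, and verifying that the drift and torsion contributions hidden in $\divg_g X - \gD_g u_0$ assemble, against the weight $e^{-f}$ produced by the soliton, into a self-adjoint operator, so that the favorable sign of $\brs{T}^2 - \tr_g \rho(h)$ is genuinely exploited rather than destroyed.
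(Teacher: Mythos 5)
The statement is not proved in this paper at all; it is imported verbatim from \cite{SBIPCF} (Lemma 6.3), so your proposal has to be judged against that argument. You start from the right place, namely the evolution equation of Lemma \ref{volumeformev} applied along the self-similar solution, but you try to close the argument with an integral identity on the $t=0$ slice, and that step does not survive in general dimension. For $n=2$ your argument is correct: a pluriclosed metric on a complex surface is Gauduchon, so $\int_M \Delta_g u_0\, dV_g = 0$ and $\int_M \divg_g X\, dV_g = 0$, the identity $\divg_g X - \Delta_g u_0 = \brs{T}^2 - \tr_g \rho(h) \geq 0$ integrates to force $T \equiv 0$, and the reduction to compact steady gradient K\"ahler--Ricci solitons finishes the proof. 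But for $n>2$ the quantity $\divg_g X - \Delta_g u_0$ is not ``self-adjoint with respect to $e^{-f}\gw_g^n$'' in any sense that makes its weighted integral vanish. With $X = \grad_g f$ one has $\int_M \divg_g X\, e^{-f} dV_g = \int_M \brs{\nabla f}^2 e^{-f} dV_g \geq 0$, which enters with the \emph{same} sign as the nonnegative right-hand side rather than cancelling it; and the Chern Laplacian is not self-adjoint against $e^{-f} dV_g$, nor even against $dV_g$, unless $g$ is Gauduchon, since $\int_M \Delta_g u\, dV_g$ is governed by $\i\del\delb(\gw_g^{n-1})$ and not by $\i\del\delb\gw_g$. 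The appeal to the Perelman $\FF$-functional of \cite{ST3} does not rescue this: that is a monotonicity statement for the tensorial soliton operator of the gauge-equivalent generalized Ricci flow, and it does not supply the scalar integration by parts you need. So in the general case the argument reaches no conclusion, and this is exactly the weak point you flagged yourself.

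The repair is to run the argument parabolically instead of at $t=0$. Since $(\dt - \Delta)\log\frac{\det g_t}{\det h} = \brs{T}^2 - \tr_{g_t}\rho(h) \geq 0$, the maximum principle makes $\inf_M \log\frac{\det g_t}{\det h}$ nondecreasing in $t$, while self-similarity gives $\int_M \frac{\det g_t}{\det h}\, dV_h = \Vol(M,g_t) = \Vol(M,g)$ for all $t$, which caps that infimum from above uniformly; a monotone bounded quantity together with the time-independence of the geometry up to diffeomorphism then forces $\brs{T}^2 - \tr_g\rho(h) \equiv 0$ via the strong maximum principle, with no Gauduchon hypothesis and no weighted measure needed. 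This is the shape of the cited proof, and it is where your proposal genuinely diverges from it. Two smaller points: the claim that the soliton diffeomorphisms are automatically biholomorphic ``since pluriclosed flow fixes $J$'' needs a sentence of justification (a priori $\varphi_t^*g$ being $J$-Hermitian only says $g$ is Hermitian for $(\varphi_t)_*J$), and you should pin down the precise soliton equation being used, since the pluriclosed soliton vector field differs from the gradient generalized Ricci soliton by a Lee-form term, which is precisely why the hypothesis $\rho(h) \leq 0$ is not vacuous here. Your final step, the rigidity of compact steady gradient K\"ahler--Ricci solitons, is correct and standard.
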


\begin{cor} \label{convergencecor} Let $(M^{2n}, J, h)$ be a compact Hermitian
manifold
with $\rho(h) \leq 0$.  Suppose $g_t$ is a solution to pluriclosed flow on
$[0,\infty)$ satisfying
\begin{align*}
C^{-1} h \leq g \leq C h, \qquad \brs{\N_h^k \gU(g,h)}^2 \leq C,
\end{align*}
where $\gU(g,h) = \N^g - \N^h$ is the difference of the Chern connections
associated to $g$ and $h$.  Then $g_t$ converges exponentially to a Calabi-Yau
metric.
\begin{proof} This argument is implicit in the proof of (\cite{SBIPCF} Theorem
1.1), though not stated explicitly and so we repeat it for convenience.  With
the assumed uniform estimates, any sequence of times $t_j \to \infty$ admits a
smooth subsequential
limiting metric on the same complex manifold.  Moreover, the assumed uniform
estimates imply that the Perelman-type $\FF$ functional for the
pluriclosed flow (\cite{ST3} Theorem 1.1) has a uniform upper bound for all
times.  It follows from a standard argument that any subsequential limit as
described above is a
pluriclosed steady soliton, and hence by Lemma \ref{riglemma} Calabi-Yau.  It
now follows
from the linear/dynamic stability result of (\cite{ST1} Theorem 1.2) that the
whole flow converges exponentially to $g_{\infty}$, as required.
\end{proof}
\end{cor}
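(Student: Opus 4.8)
The plan is to follow the standard recipe for proving convergence of a geometric flow from uniform a priori estimates: extract a subsequential limit, identify it as a soliton using a monotone functional, apply rigidity to see that the limit is Calabi-Yau, and finally upgrade to exponential convergence of the whole flow via dynamic stability. First I would establish subsequential convergence. The hypotheses $C^{-1} h \leq g \leq C h$ and $\brs{\N_h^k \gU(g,h)}^2 \leq C$ bound $g_t$ and all of its covariant derivatives with respect to the fixed background $h$, uniformly in $t$. By Arzel\`a--Ascoli applied in the $C^\infty$ topology on the compact manifold $M$, any sequence $t_j \to \infty$ admits a subsequence along which $g_{t_j}$ converges smoothly to a Hermitian metric $g_{\infty}$ on $(M^{2n}, J)$. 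Since pluriclosedness $\i \del \delb \gw = 0$ is a closed condition under $C^\infty$ convergence, the limit $g_{\infty}$ is again pluriclosed.

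Next I would show that this limit is a steady gradient soliton and hence Calabi-Yau. For this I invoke the Perelman-type functional $\FF$ for pluriclosed flow constructed in \cite{ST3}, which is monotone along the flow and whose monotonicity formula detects steady solitons. The uniform estimates guarantee that $\FF(g_t)$ stays bounded, so by monotonicity it converges as $t \to \infty$; consequently the nonnegative quantity appearing in the monotonicity formula, which vanishes precisely on steady gradient solitons, is integrable in time and must tend to zero along a suitable sequence. Passing to the smooth limit furnished by the first step, $g_{\infty}$ satisfies the steady gradient soliton equation. Since $\rho(h) \leq 0$ by hypothesis, Lemma \ref{riglemma} then forces $g_{\infty}$ to be Calabi-Yau.

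Finally I would upgrade subsequential convergence to exponential convergence of the entire flow by appealing to the linear/dynamic stability of Calabi-Yau metrics under pluriclosed flow (\cite{ST1} Theorem 1.2). Having produced a Calabi-Yau subsequential limit $g_{\infty}$, the flow enters every $C^\infty$-neighborhood of $g_{\infty}$ along the chosen subsequence, and dynamic stability then guarantees that the whole flow converges to $g_{\infty}$ at an exponential rate. The main obstacle is the soliton-extraction step: one must verify that the uniform estimates genuinely bound $\FF$ from above, which requires uniform control of the $t$-dependent minimizing potential entering the definition of $\FF$, and one must arrange the limiting argument so that the vanishing of the monotonicity integrand passes to the smooth limit and yields the exact soliton equation rather than merely an approximate one. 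The remaining steps are comparatively routine given the cited results.
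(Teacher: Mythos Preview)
Your proposal is correct and follows essentially the same route as the paper: extract a smooth subsequential limit from the uniform estimates, use boundedness and monotonicity of the Perelman-type $\FF$ functional from \cite{ST3} to conclude the limit is a steady gradient soliton, apply Lemma \ref{riglemma} to identify it as Calabi-Yau, and then invoke the dynamic stability result of \cite{ST1} to upgrade to exponential convergence of the whole flow. Your write-up is in fact somewhat more explicit than the paper's, which compresses the soliton-extraction step into the phrase ``a standard argument''; your identification of the control on the minimizing potential for $\FF$ as the one point requiring care is accurate and matches what is implicit in the cited references.
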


\section{Evolution of holomorphic sections} \label{secev}

\begin{lemma} (\cite{SPCFSTB} Lemma 4.7) \label{01formgenev} Let $(M^{2n},
 J,g_t)$ be a solution to
pluriclosed flow, and
suppose $\gb_t, \mu_t \in (T^*_{1,0})^{\otimes p}$ are one-parameter families satisfying
\begin{align} \label{pformev}
 \dt \gb =&\ {\gD}_{g_t} \gb + \mu.
\end{align}
Then
\begin{align*} 
 \dt \brs{\gb}^2 =&\ \gD \brs{\gb}^2 - \brs{\N \gb}^2 - \brs{\bar{\N} \gb}^2 -
p \IP{Q, \tr_g \left(\gb \otimes \bar{\gb} \right)} + 2 \Re \IP{\gb,\mu}.
\end{align*}
\end{lemma}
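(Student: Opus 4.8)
The plan is to compute $\dt |\beta|^2$ directly by differentiating the pointwise Hermitian norm induced by $g_t$ on $(T^*_{1,0})^{\otimes p}$, separating the contribution of the evolving metric from the contribution of the evolving tensor $\beta$. Since $\beta \in (T^*_{1,0})^{\otimes p}$, in local holomorphic coordinates we may write $|\beta|^2 = g^{\bar j_1 i_1} \cdots g^{\bar j_p i_p} \beta_{i_1 \cdots i_p} \bar\beta_{\bar j_1 \cdots \bar j_p}$, so that
\begin{align*}
\dt |\beta|^2 = \sum_{a=1}^p \left( \dt g^{\bar j_a i_a} \right) (\cdots) + 2\Re \IP{ \dt\beta, \beta }.
\end{align*}
For the metric terms I would use the pluriclosed flow equation in the form $\dt g = -S + Q$, hence $\dt g^{\bar j i} = g^{\bar j k} g^{\bar l i}(S - Q)_{k\bar l}$. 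This splits the first sum into an $S$-piece and a $Q$-piece; the $Q$-piece, summed over the $p$ factors, is exactly the term $-p\IP{Q, \tr_g(\beta\otimes\bar\beta)}$ claimed (the trace notation $\tr_g$ encoding the contraction of $Q$ against one cotangent slot of $\beta$ and the matching slot of $\bar\beta$). The $S$-piece should be absorbed when we expand the Laplacian term, which is the heart of the computation.

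For the evolution term, I substitute $\dt\beta = \gD_{g_t}\beta + \mu$, producing $2\Re\IP{\gD\beta,\beta} + 2\Re\IP{\mu,\beta}$. The last summand is the desired $2\Re\IP{\beta,\mu}$. The remaining task is the Bochner-type identity: I must show that the $S$-contribution from the metric terms together with $2\Re\IP{\gD\beta,\beta}$ reorganizes into $\gD|\beta|^2 - |\N\beta|^2 - |\bar\N\beta|^2$. Here one must be careful about which Laplacian $\gD$ denotes in \eqref{pformev}: in the pluriclosed flow literature this is typically the Hodge/complex Laplacian built from the Chern connection, which differs from the rough $\bar\N\N$-Laplacian precisely by curvature and torsion terms. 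The standard Weitzenb\"ock computation on a Hermitian manifold gives $\gD(\beta,\bar\beta) = \IP{\gD\beta,\bar\beta} + \IP{\beta, \overline{\gD\beta}}$ modulo exactly a Chern-curvature term which, traced against $g^{-1}$ in each of the $p$ cotangent slots of $\beta$, produces a term involving the Chern–Ricci-type curvature $S$ acting slotwise on $\beta$; this is designed to cancel the $S$-piece coming from $\dt g^{-1}$. I would invoke the commutation formulas for the Chern connection and the fact that $\beta$ lives in a pure $(p,0)$ tensor bundle so that only the $S$-type trace (not the $\rho$-type trace) appears.

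The main obstacle I anticipate is bookkeeping the curvature and torsion terms in the Weitzenb\"ock formula precisely enough to see the clean cancellation of the $S$-terms and the exact matching of the $Q$-term coefficient $-p$. This is entirely a local coordinate computation with no conceptual difficulty, but it is where sign errors and mislabeled contractions creep in; the cleanest route is to first verify the case $p=1$ (where $\beta$ is a $(1,0)$-form and the identity is essentially a known Bochner formula for pluriclosed flow, compare the structure of Lemma \ref{delalphaev}), and then observe that for general $p$ each of the $p$ cotangent slots contributes one copy of the curvature/torsion terms by the Leibniz rule, yielding the factor $p$ multiplying the $Q$-contraction while the $\gD|\beta|^2$, $|\N\beta|^2$, $|\bar\N\beta|^2$ terms assemble slot-independently. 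Since the result is cited from \cite{SPCFSTB} Lemma 4.7, I would present the proof as the above reduction together with a reference for the detailed $p=1$ Bochner identity, rather than reproducing every index contraction.
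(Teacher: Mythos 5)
Your proposal is correct and follows essentially the same route as the paper: the paper does not reprove this lemma (it cites \cite{SPCFSTB} Lemma 4.7, with a remark that the proof extends to general $p$), but it carries out exactly this computation for the covariant analogue in Lemma \ref{01tensgenev} --- direct differentiation of the norm, substitution of $\dt g = -S+Q$ into the metric factors, the Chern-connection commutation formula producing a slotwise $S$-trace that cancels the metric-evolution contribution, and the identity $\gD\brs{\gb}^2 = \IP{\gD\gb,\bar{\gb}}+\IP{\gb,\gD\bar{\gb}}+\brs{\N\gb}^2+\brs{\bar{\N}\gb}^2$. Your sign bookkeeping (with $\dt g^{-1}$ contributing $S-Q$ and hence the $-p\IP{Q,\tr_g(\gb\otimes\bar{\gb})}$ term, versus $+p$ in the covariant case) and your identification of the $S$-type rather than $\rho$-type trace both match the paper's computation.
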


\begin{rmk} Lemma 4.7 of \cite{SPCFSTB} is stated only for $\gb \in
\Lambda^{p,0}$ but the proof easily applies to this more general case.
\end{rmk}

\begin{cor} \label{pformevcor} Let $(M^{2n}, J,g_t)$ be a solution to
pluriclosed flow, and
suppose $\gb \in (T^*_{1,0})^{\otimes p}$ is holomorphic.  Then
\begin{align} \label{ptensnormev}
 \dt \brs{\gb}^2 =&\ \gD \brs{\gb}^2 - \brs{\N \gb}^2 -
p \IP{Q, \tr_g \left(\gb \otimes \bar{\gb} \right)}.
\end{align}
\begin{proof} If $\gb$ is holomorphic then $\gb_t = \gb$ is a solution of
(\ref{Aev})
with $\mu = 0$, and so from Lemma \ref{01formgenev} we conclude
\begin{align*}
 \dt \brs{\gb}^2 =&\ \gD \brs{\gb}^2 - \brs{\N \gb}^2 - \brs{\bar{\N} \gb}^2 -
p \IP{Q, \tr_g \left(\gb \otimes \bar{\gb} \right)}\\
=&\ \gD \brs{\gb}^2 - \brs{\N \gb}^2 -
p \IP{Q, \tr_g \left(\gb \otimes \bar{\gb} \right)},
\end{align*}
as required.
\end{proof}
\end{cor}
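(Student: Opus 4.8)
The plan is to deduce this directly from Lemma \ref{01formgenev} by observing that a holomorphic section gives a \emph{static} solution of the heat-type equation (\ref{pformev}) with vanishing forcing term. The key elementary fact is that on a holomorphic Hermitian bundle the $(0,1)$-part of the Chern connection coincides with $\delb$, and this persists on $(T^*_{1,0})^{\otimes p}$ since the relevant connection there is the one induced from the Chern connection on $T^*_{1,0}$. Hence if $\gb$ is a holomorphic section of $(T^*_{1,0})^{\otimes p}$ we have $\N_{\bk} \gb = \delb_{\bk} \gb = 0$ for every $\bk$, so in particular $\bar{\N} \gb \equiv 0$ and thus $\brs{\bar{\N}\gb}^2 \equiv 0$ pointwise.

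From here I would argue as follows. The operator $\gD$ appearing in (\ref{pformev}) is the Chern Laplacian $\gD_{g_t} = g^{\bj i}\N_i \N_{\bj}$, so the identity $\N_{\bj}\gb = 0$ immediately gives $\gD_{g_t}\gb = 0$ for each $t$. Since $\gb$ does not depend on $t$, the constant family $\gb_t \equiv \gb$ satisfies $\dt \gb = 0 = \gD_{g_t}\gb$, i.e. it solves (\ref{pformev}) with $\mu = 0$. Applying Lemma \ref{01formgenev} to this family, the two terms involving $\mu$ and $\bar{\N}\gb$, namely $\brs{\bar{\N}\gb}^2$ and $2\Re\IP{\gb,\mu}$, both vanish, leaving precisely the stated equation (\ref{ptensnormev}).

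There is no real obstacle here; the entire content is the observation above, and the only step meriting a word of justification is the identification of the $(0,1)$-part of the Chern connection on the holomorphic bundle $(T^*_{1,0})^{\otimes p}$ with $\delb$, which is standard. One should also remark in passing that the cited Lemma 4.7 of \cite{SPCFSTB} is stated for $\gb \in \Lambda^{p,0}$ but, as noted in the preceding remark, its proof applies verbatim to general sections of $(T^*_{1,0})^{\otimes p}$, so that Lemma \ref{01formgenev} is available in the generality we need.
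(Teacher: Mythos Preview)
Your proposal is correct and follows essentially the same approach as the paper: observe that a holomorphic $\gb$ gives a static solution of (\ref{pformev}) with $\mu=0$, apply Lemma~\ref{01formgenev}, and drop the $\brs{\bar{\N}\gb}^2$ term since $\bar{\N}\gb=\delb\gb=0$. The only difference is that you spell out explicitly why $\gD_{g_t}\gb=0$ and why $\bar{\N}\gb=0$, whereas the paper leaves these implicit in the one-line deduction.
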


\begin{lemma} \label{01tensgenev} Let $(M^{2n}, J, g_t)$ be a solution to
pluriclosed flow, and
suppose $A_t , B_t \in T^{p,0}$ are one-parameter families satisfying
\begin{align} \label{Aev}
 \dt A =&\ {\gD}_{g_t} A + B.
\end{align}
Then
\begin{align*}
 \dt \brs{A}^2 =&\ \gD \brs{A}^2 - \brs{\N A}^2 - \brs{\bar{\N} A}^2 +
p \IP{Q, \tr_g \left(A \otimes \bar{A} \right)} + 2 \Re \IP{A,B}.
\end{align*}
\begin{proof} By direct computation we have
 \begin{align*}
  \dt \brs{A}^2 =&\ \dt \left( g_{i_1 \bj_1} \dots g_{i_p \bj_p} {A}^{i_1\dots
i_p} \bar{A}^{\bj_1\dots \bj_p} \right)\\
  =&\ p (-S + Q)_{i_1 \bj_1} g_{i_2 \bj_2} \dots g_{i_p \bj_p} {A}^{i_1 \dots
i_p}
\bar{A}^{\bj_1 \dots \bj_p} + \IP{ \gD A, \bar{A}} +
\IP{A, \bar{\gD} \bar{A}} + 2 \Re \IP{A,B}\\
=&\ p \IP{- S + Q, \tr_g \left(A \otimes \bar{A} \right)} + \IP{ \gD A,
\bar{A}} +
\IP{A, \bar{\gD} \bar{A}} + 2 \Re \IP{A,B}.
 \end{align*}
Next we observe the commutation formula
\begin{align*}
 \bar{\gD} \bar{A}^{\bj_1 \dots \bj_p} =&\ g^{\bk l} \N_{\bk} \N_l
\bar{A}^{\bj_1\dots\bj_p}\\
=&\ g^{\bk l} \N_l
\N_{\bk} \bar{A}_{\bj_1\dots\bj_p} + \sum_{r=1}^p g^{\bk l} \Omega_{\bk l
\bm}^{\bj_r} \bar{A}^{\bj_1\dots \bj_{r-1} \bm \bj_{r+1}\dots\bj_p}\\
=&\ \gD \bar{A}_{\bj_1\dots\bj_p} + \sum_{r=1}^p S^{\bj_r}_{\bm}
\bar{A}^{\bj_1\dots
\bj_{r-1} \bm \bj_{r+1}\dots\bj_p}.
\end{align*}
It follows that
\begin{align*}
 \IP{{A}, \bar{\gD} \bar{A}} =&\ g_{i_1 \bj_1} \dots g_{i_p \bj_p}
{A}^{i_1\dots i_p} \bar{\gD} \bar{A}^{\bj_1 \dots \bj_p}\\
 =&\ g_{i_1 \bj_1} \dots g_{i_p \bj_p} {A}^{i_1\dots i_p} \left[ \gD
\bar{A}^{\bj_1 \dots \bj_p} + \sum_{r=1}^p S^{\bj_r}_{\bm} A^{\bj_1\dots
\bj_{r-1}
\bm \bj_{r+1}\dots\bj_p} \right]\\
 =&\ \IP{ {A}, \gD \bar{A}} + p \IP{S, \tr_g A \otimes \bar{A}}.
\end{align*}
Lastly observe the identity
\begin{align*}
 \gD \brs{A}^2 =&\ \IP{\gD A, \bar{A}} + \IP{{A}, \gD \bar{A}} +
\brs{\N
A}^2 + \brs{\bar{\N} A}^2.
\end{align*}
Combining the above calculations yields the lemma.
\end{proof}
\end{lemma}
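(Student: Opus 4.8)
The plan is to establish the identity by a direct computation in local holomorphic coordinates, proceeding exactly as in the covariant case (Lemma \ref{01formgenev}, Corollary \ref{pformevcor}), while keeping track of the fact that the pointwise norm of a section of $(T_{1,0})^{\otimes p}$ involves the metric with \emph{lowered} indices. Write $\brs{A}^2 = g_{i_1 \bj_1} \cdots g_{i_p \bj_p} A^{i_1 \dots i_p} \bar{A}^{\bj_1 \dots \bj_p}$ and differentiate in $t$. By the Leibniz rule $\dt \brs{A}^2$ splits into the terms where a metric factor is differentiated, the terms where $A$ is differentiated, and the terms where $\bar{A}$ is differentiated. For the first group, substitute $\dt g_{i \bj} = (-S + Q)_{i \bj}$ from (\ref{PCFCC}); since each of the $p$ differentiated metric factors contributes the same expression once the contracted indices are relabelled, this group becomes $p \IP{-S + Q, \tr_g(A \otimes \bar{A})}$. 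For the remaining two groups insert $\dt A = \gD_{g_t} A + B$ and its conjugate; the $B$-contributions assemble into $2 \Re \IP{A, B}$, while the second-order parts give $\IP{\gD A, \bar{A}} + \IP{A, \bar{\gD} \bar{A}}$, where $\gD A = g^{\bk l} \N_l \N_{\bk} A$ and $\bar{\gD} \bar{A} = g^{\bk l} \N_{\bk} \N_l \bar{A}$ is its conjugate.

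The only step requiring real care is to convert $\bar{\gD} \bar{A}$ into $\gD \bar{A} = g^{\bk l} \N_l \N_{\bk} \bar{A}$, i.e.\ to commute $\N_{\bk}$ past $\N_l$. Because the Chern connection has torsion of type $(2,0)$ only, the commutator $[\N_{\bk}, \N_l]$ has no torsion term and equals the Chern curvature operator; applied to $\bar{A}$, which carries $p$ upper anti-holomorphic indices, and traced against $g^{\bk l}$, the resulting curvature terms collapse precisely to $\sum_{r=1}^p S^{\bj_r}_{\bm} \bar{A}^{\bj_1 \dots \bm \dots \bj_p}$ in view of the definition $S_{i \bj} = g^{\bl k} \Omega_{k \bl i \bj}$. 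Pairing with $A$ and relabelling as in the first group yields $\IP{A, \bar{\gD} \bar{A}} = \IP{A, \gD \bar{A}} + p \IP{S, \tr_g(A \otimes \bar{A})}$. I expect this commutation, and in particular getting the index placement of the surviving $S$-term exactly right, to be the main obstacle; everything else is bookkeeping.

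To finish, use the Bochner-type identity
\begin{align*}
\gD \brs{A}^2 = \IP{\gD A, \bar{A}} + \IP{A, \gD \bar{A}} + \brs{\N A}^2 + \brs{\bar{\N} A}^2,
\end{align*}
which follows from expanding $\gD \brs{A}^2 = g^{\bk l} \N_l \N_{\bk}\left( g_{i_1 \bj_1} \cdots g_{i_p \bj_p} A^{i_1 \dots i_p} \bar{A}^{\bj_1 \dots \bj_p} \right)$ by the Leibniz rule and using $\N g \equiv 0$. Substituting the pieces above, the $-p \IP{S, \tr_g(A \otimes \bar{A})}$ coming from the metric variation cancels the $+p \IP{S, \tr_g(A \otimes \bar{A})}$ produced by the commutation, and one is left with
\begin{align*}
\dt \brs{A}^2 = \gD \brs{A}^2 - \brs{\N A}^2 - \brs{\bar{\N} A}^2 + p \IP{Q, \tr_g(A \otimes \bar{A})} + 2 \Re \IP{A, B},
\end{align*}
as claimed. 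It is worth noting that the $Q$-term appears with sign $+p$, opposite to the $-p$ in Lemma \ref{01formgenev}: since $\brs{A}^2$ involves $g$ rather than its inverse, the time derivative of the metric contributes $+pQ$, while the $S$-terms cancel as before; this is precisely the sign needed when the lemma is applied to holomorphic sections of subbundles of $(T_{1,0})^{\otimes p}$.
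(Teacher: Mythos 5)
Your proposal is correct and follows essentially the same route as the paper's proof: Leibniz differentiation of $\brs{A}^2$ with $\dt g = -S+Q$ inserted on the lowered metric factors, the commutation of $\bar{\gD}\bar{A}$ past to $\gD\bar{A}$ producing the $p\IP{S,\tr_g(A\otimes\bar{A})}$ term that cancels the $-pS$ from the metric variation, and the Bochner-type expansion of $\gD\brs{A}^2$. Your closing remark correctly identifies why the $Q$-term enters with the opposite sign to Lemma \ref{01formgenev}.
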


\begin{cor} \label{ptensorev} Let $(M^{2n}, J,g_t)$ be a solution to
pluriclosed flow, and
suppose $A \in T^{p,0}$ is holomorphic.  Then
\begin{gather} \label{ptensev}
\begin{split}
  \dt \brs{A}^2 =&\ \gD \brs{A}^2 - \brs{\N A}^2 +
p \IP{Q, \tr_g \left(A \otimes \bar{A} \right)},\\
\dt \log \brs{A}^2 \leq&\ \gD \log \brs{A}^2 + p \brs{T}^2.
\end{split}
\end{gather}
\begin{proof} If $A$ is holomorphic then $A_t = A$ is a solution of (\ref{Aev})
with $B = 0$, and so from Lemma \ref{01formgenev} we conclude
\begin{align*}
 \dt \brs{A}^2 =&\ \gD \brs{A}^2 - \brs{\N A}^2 - \brs{\bar{\N} A}^2 +
p \IP{Q, \tr_g \left(A \otimes \bar{A} \right)}\\
=&\ \gD \brs{A}^2 - \brs{\N A}^2 +
p \IP{Q, \tr_g \left(A \otimes \bar{A} \right)},
\end{align*}
as required.  A further elementary calculation yields that
\begin{align*}
\dt \log \brs{A}^2 =&\ \gD \log \brs{A}^2 + \frac{\brs{\N
\brs{A}^2}^2}{\brs{A}^2} - \brs{\N A}^2 + p \frac{\IP{Q, \tr_g (A \otimes
\bar{A})}}{\brs{A}^2}.
\end{align*}
Since $A$ is holomorphic it follows from Kato's inequality that
\begin{align*}
\brs{\N \brs{A}^2}^2 \leq&\ \brs{\N A}^2 \brs{A}^2.
\end{align*}
Also, by the Cauchy-Schwarz inequality it follows that
\begin{align*}
\IP{Q,\tr_g (A \otimes \bar{A})} \leq \brs{Q} \brs{A}^2 \leq \brs{T}^2
\brs{A}^2.
\end{align*}
The corollary follows.
\end{proof}
\end{cor}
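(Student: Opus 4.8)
The plan is to read off both statements from the evolution equation for holomorphic tensors already established in Lemma \ref{01tensgenev}. First I would note that a holomorphic section $A \in T^{p,0}$ is annihilated by the $(0,1)$-part of the Chern connection, so $\bar{\N} A = 0$ and hence $\gD_{g_t} A = g^{\bk l} \N_l \N_{\bk} A = 0$ for every $t$. Therefore the constant family $A_t \equiv A$ solves (\ref{Aev}) with $B = 0$, and substituting this into Lemma \ref{01tensgenev}, together with $\brs{\bar{\N} A}^2 = 0$, gives the first identity
\begin{align*}
\dt \brs{A}^2 = \gD \brs{A}^2 - \brs{\N A}^2 + p \IP{Q, \tr_g(A \otimes \bar{A})}
\end{align*}
directly. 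Note that the curvature term $S$ appearing in the intermediate computation has already been absorbed inside Lemma \ref{01tensgenev}, so nothing further is needed here.

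For the logarithmic inequality I would work on the open set $\{A \neq 0\}$, divide the first identity by $\brs{A}^2$, and rewrite the Laplacian term via the standard Bochner-type identity $\gD \log \brs{A}^2 = \brs{A}^{-2} \gD \brs{A}^2 - \brs{A}^{-4} \brs{\N \brs{A}^2}^2$. This yields
\begin{align*}
\dt \log \brs{A}^2 = \gD \log \brs{A}^2 + \frac{\brs{\N \brs{A}^2}^2}{\brs{A}^4} - \frac{\brs{\N A}^2}{\brs{A}^2} + p \frac{\IP{Q, \tr_g(A \otimes \bar{A})}}{\brs{A}^2}.
\end{align*}
Then I would close the estimate with two elementary inputs: (i) since $\bar{\N} A = 0$, the gradient $\N \brs{A}^2$ is the contraction of $\N A$ against $\bar{A}$, so Cauchy--Schwarz (the refined Kato inequality for holomorphic sections) gives $\brs{\N \brs{A}^2}^2 \leq \brs{\N A}^2 \brs{A}^2$, making the first two non-Laplacian terms combine to something $\leq 0$; and (ii) because $Q$ is a fixed quadratic expression in the torsion, $\brs{Q} \leq \brs{T}^2$ pointwise, and Cauchy--Schwarz gives $\IP{Q, \tr_g(A \otimes \bar{A})} \leq \brs{Q} \brs{A}^2 \leq \brs{T}^2 \brs{A}^2$. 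Assembling these produces $\dt \log \brs{A}^2 \leq \gD \log \brs{A}^2 + p \brs{T}^2$ on $\{A \neq 0\}$, which is where $\log \brs{A}^2$ is defined and smooth, and is all that later maximum-principle arguments require.

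The only step that needs any care is the Kato inequality in this tensor-valued Hermitian setting. The point is that the sharp form (constant $1$, correct sign) holds precisely because holomorphicity forces $\bar{\N} A = 0$, so that $\N \brs{A}^2$ never involves the antiholomorphic derivative of $A$ and plain Cauchy--Schwarz applies with no loss; without holomorphicity one would only get the lossy Kato inequality and the argument would not go through. Everything else is bookkeeping.
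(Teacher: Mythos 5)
Your proposal is correct and follows essentially the same route as the paper: specialize Lemma \ref{01tensgenev} to the constant family $A_t \equiv A$ with $B=0$, drop $\brs{\bar{\N}A}^2$ by holomorphicity, then pass to $\log\brs{A}^2$ and close with the sharp Kato inequality and Cauchy--Schwarz on the $Q$ term. Your intermediate identity for $\dt \log\brs{A}^2$ even has the correctly normalized denominators ($\brs{A}^4$ and $\brs{A}^2$), fixing a harmless typo in the paper's display, and your remark about working on $\{A \neq 0\}$ matches how the paper later applies the estimate.
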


\section{Proofs of Theorems} \label{proofsec}

\begin{proof} [Proof of Theorem \ref{contramplethm}] Let $(M^{2n}, J)$ be a
compact complex manifold and let $E$ denote a contravariant globally generated
proper bundle.  We claim that, given a background Hermitian metric $h$, one has
\begin{align} \label{contra10}
 g_t \geq C^{-1} h,
\end{align}
for some uniform constant $C$ and any smooth existence time $t$.  First we
observe that since $M$ is compact, the space of holomorphic sections of $E$ is
finite dimensional, and we choose a basis $\{\gs_i\}$.  Declaring $\gs_i(x,t) =
\gs_i(x)$ one has that
\begin{align*}
 \dt \gs_i = \gD \gs_i.
\end{align*}
It follows directly
from the maximum principle applied to the result of Corollary \ref{pformevcor}
that
\begin{align*}
 \sup_{M \times \{t\}} \brs{\gs_i}_{g_t}^2 \leq \sup_{M \times \{0\}}
\brs{\gs_i}_{g_0}^2 \leq C .
\end{align*}
Since the $\gs_i$ form a finite spanning set at each point $p$, it follows that
the induced metric on $E$ is bounded above.  Since $E$ is proper this implies
that the metric on $T^*$, i.e. $g^{-1}$, is bounded above.  Thus the claim of
(\ref{contra10}) follows.  The statement of existence on $[0,\tau^*)$ follows
directly from Theorem \ref{uplowbndthm} and Theorem \ref{EKthm}.

Now we establish the statement of convergence.  Let $h$ denote a background
Hermitian metric for which $\rho(h) = 0$.  Combining Lemmas \ref{volumeformev}
and \ref{delalphaev} we obtain that
\begin{align*}
\left(\dt - \gD \right) \left[ \log \frac{\det g}{\det h} + \brs{\del \ga}^2
\right] \leq&\ 0.
\end{align*}
It follows from the maximum principle that
\begin{align*}
g_t \leq C h, \qquad \brs{\del \ga}^2 \leq C.
\end{align*}
It now follows directly from Theorem \ref{EKthm} and Corollary \ref{convergencecor} that the flow exists smoothly for all time and converges to a Calabi-Yau metric, as claimed.
\end{proof}

\begin{proof} [Proof of Theorem \ref{covamplethm}]  Let $(M^{2n}, J)$ be a
compact complex manifold with $c_1^{BC} = 0$, and let $E$ denote a covariant
globally generated weakly proper bundle.  We claim that, given a background
Hermitian metric $h$, one has
\begin{align} \label{cov10}
 g_t \leq C h,
\end{align}
for some uniform constant $C$ and any smooth existence time $t$.  As above, since $M$ is compact, the space of holomorphic sections of $E$ is
finite dimensional, and we choose a basis $\{\gs_i\}$.  Declaring $\gs_i(x,t) =
\gs_i(x)$ one has that
\begin{align*}
 \dt \gs_i = \gD \gs_i.
\end{align*}
Since $c_1^{BC} = 0$ and $[\del \gw_0] = 0$ we can choose $\eta$ and $\phi$ as
in Lemma \ref{delalphaev} so that
\begin{align*}
\left( \dt - \gD \right) \brs{\phi}^2 \leq&\ - \brs{T}^2.
\end{align*}
Now define
\begin{align*}
\Phi =&\ \log \brs{\gs_i}^2 + p \brs{\phi}^2
\end{align*}
It follows from Lemma \ref{delalphaev} and Corollary \ref{ptensorev} that
\begin{align*}
\left(\dt - \gD \right) \Phi \leq&\ 0.
\end{align*}
Note that we can still apply the maximum principle to $\Phi$ at maximum points
even though it approaches $-\infty$ at the vanishing locus of $\gs_i$.  It
follows that
\begin{align*}
\sup_{M \times \{t\}} \brs{\gs_i}^2_{g_t} \leq&\ \sup_{M \times \{0\}}
\brs{\gs_i}^2_{g_0} \leq C.
\end{align*}
Since the $\gs_i$ form a finite spanning set at each point $p$, it follows that
the induced metric on $E$ is bounded above.  Since $c_1^{BC} = 0$ we choose a
Hermitian metric $h$ such that $\rho(h) = 0$.  It follows from Lemma
\ref{volumeformev} that
\begin{align*}
\left( \dt - \gD \right) \log \frac{\det g}{\det h} =&\ \brs{T}^2 \geq 0.
\end{align*}
It follows from the maximum principle that
\begin{align*}
\inf_{M \times \{t\}} \frac{\det g_t}{\det h} \geq \inf_{M \times \{0\}}
\frac{\det g_0}{\det h} \geq C^{-1}.
\end{align*}
As we have established an upper bound for the induced metric on $E$ and a lower
bound on the volume form, since the bundle $E$ is weakly proper it now follows
that the metric $g_t$ is bounded above.  But again since the volume form is
bounded below it follows that $g_t$ is bounded below as well.  It follows
directly from Theorem \ref{EKthm} and \ref{convergencecor} that the flow exists
smoothly for all time and converges to a Calabi-Yau metric.
\end{proof}

\begin{proof} [Proof of Corollary \ref{GKcor}] Let $(M^{2n}, I, J_t, g_t)$ be
the solution to generalized K\"ahler-Ricci flow in the $I$-fixed gauge, as
explained in \S \ref{gksec}.  This means that $(M, I, g_t)$ is a solution to
pluriclosed flow.  In either case of the Corollary, using Theorem
\ref{contramplethm} or \ref{covamplethm} we obtain the long time existence and
exponential convergence of the flow to a Calabi-Yau manifold.  In particular,
the torsion is decaying exponentially to zero, and so the vector field defining
the diffeomorphisms $\phi_t$ such that $J_t = \phi_t^* J$ are converging
exponentially fast to a limiting diffeomorphism $\phi_{\infty}$, and hence $J_t$
is converging to a limit $J_{\infty}$.  The corollary follows.
\end{proof}

\bibliographystyle{hamsplain}

\end{document}